\documentclass[12pt]{amsart}
\usepackage[T1]{fontenc}
\usepackage[utf8]{inputenc}
\usepackage[margin=1in]{geometry}
\usepackage{mdframed}
\usepackage[colorinlistoftodos]{todonotes}
\usepackage{tikz}
\usetikzlibrary{matrix}
\usepackage{float}
\usepackage{amssymb}
\usepackage{amsmath}
\usepackage[title]{appendix}
\usepackage{amsthm}
\usepackage{amsfonts}
\usepackage{tikz-cd}
\usepackage{comment}
\usepackage{mathtools}
\usepackage[colorlinks=true,
            linkcolor=black,
            citecolor=black,
            urlcolor=black]{hyperref}
\usepackage{mathrsfs}
\usepackage{enumitem}
\usepackage{tabularx}
\usepackage{todonotes}
\DeclarePairedDelimiter\abs{\lvert}{\rvert}%
\newtheorem{theorem}{Theorem}[section]
\newtheorem{lemma}[theorem]{Lemma}
\newtheorem{remark}[theorem]{Remark}

\newtheorem{definition}[theorem]{Definition}
\newtheorem{example}[theorem]{Example}
\newtheorem{proposition}[theorem]{Proposition}
\newtheorem{notation}[theorem]{Notation}
\newtheorem{question}[theorem]{Question}
\newtheorem{corollary}[theorem]{Corollary}
\newtheorem{recollection}[theorem]{Recollection}

\theoremstyle{plain} 
\newcommand{\thistheoremname}{}
\newtheorem{genericthm}[theorem]{\thistheoremname}

\newtheorem*{genericthm*}{\thistheoremname}
\newenvironment{namedthm*}[1]
  {\renewcommand{\thistheoremname}{#1}%
   \begin{genericthm*}}
  {\end{genericthm*}}
\newcommand{\Z}{\mathbb{Z}}

\newcommand{\C}{\mathbb{C}}

\newcommand{\E}{\mathbb{E}}
\renewcommand{\S}{\mathbb{S}}
\renewcommand{\P}{\mathbb{P}}
\newcommand{\F}{\mathbb{F}}

\DeclareMathOperator{\DA}{\mathcal{DA}}

\DeclareMathOperator{\id}{\text{id}}

\DeclareMathOperator{\Mod}{\text{Mod}}

\DeclareMathOperator{\fil}{\mathrm{fil}}
\DeclareMathOperator{\G}{\mathbb{G}}

\DeclareMathOperator{\NAlg}{NAlg}
\DeclareMathOperator{\Vect}{Vect}
\DeclareMathOperator{\ev}{\mathrm{ev}}

\DeclareMathOperator{\Sp}{Sp}

\DeclareMathOperator{\MU}{\mathrm{MU}}

\DeclareMathOperator{\NA}{\mathcal{NA}}

\DeclareMathOperator{\BGL}{\text{BGL}}
\DeclareMathOperator{\BU}{\mathrm{BU}}

\DeclareMathOperator{\Alg}{\mathrm{Alg}}

\DeclareMathOperator{\BSU}{\mathrm{BSU}}

\DeclareMathOperator{\Tor}{\mathrm{Tor}}
\DeclareMathOperator{\MGL}{\mathrm{MGL}}
\DeclareMathOperator{\Fil}{\mathrm{Fil}}
\DeclareMathOperator{\cell}{\mathrm{cell}}
\DeclareMathOperator{\SH}{\mathrm{SH}}
\DeclareMathOperator{\THH}{\mathrm{THH}}

\DeclareMathOperator{\Spc}{\mathrm{Spc}}

\DeclareMathOperator{\BSL}{\mathrm{BSL}}
\DeclareMathOperator{\Pic}{\mathrm{Pic}}

\title[$\mathbb{C}$-Motivic Eilenberg-Mac Lane Spectra as $\mathrm{MGL}$-Motivic Thom Spectra and $\mathbb{G}_{m} \odot {-}$]{$\mathbb{C}$-Motivic Eilenberg-Mac Lane Spectra as $\mathrm{MGL}$-Motivic Thom Spectra and Normed $\mathbb{G}_m$-Hochschild Homology}
\author{Preston Cranford}

\date{date: 2026-Oct-19}
\begin{document}
\begin{abstract}
  We exhibit the $p$-complete $\C$-motivic Eilenberg-Mac Lane spectra $\mathrm{M}\mathbb{F}_p$ and $(\mathrm{M}\mathbb{Z})_p^{\wedge}$ as $\E_3$-$\MGL_p^{\wedge}$-motivic Thom spectra over the motivic $\G_m$-loop spaces $\Omega_{\mathbb{G}_m}\BGL$ and $\Omega_{\mathbb{G}_m}\BSL$, respectively. Furthermore, for $p=2$, we compute the $\MGL_2^{\wedge}$-relative $\G_m$-Hochschild Homology $\G_m \odot^{\MGL_2^{\wedge}} {-}$, recently introduced by Bachmann-Burklund-Dundas-{\O}stv{\ae}r, of $\mathrm{M}\mathbb{F}_2$ and $(\mathrm{M}\mathbb{Z})_2^{\wedge}$ to be $\mathrm{M}\mathbb{F}_2[\BGL]$ and $(\mathrm{M}\mathbb{Z})_2^{\wedge}[\mathrm{BSL}]$ as $\E_2$-$\mathrm{M}\mathbb{F}_2$-algebras and $\E_2$-$(\mathrm{M}\mathbb{Z})_2^{\wedge}$-algebras, respectively.
\end{abstract}
\maketitle
\tableofcontents
\section{Introduction}
Motivic homotopy theory is about the emulation of homotopy theory in algebraic geometry. To the author's knowledge, the phenomenom of the Hopkins-Mahowald theorem from homotopy theory hasn't been well emulated in algebraic geometry. The Hopkins-Mahowald theorem, at the prime $p=2$ is equivalent in math content to each of the four following statements statements"
\begin{itemize}
	\item(Hopkins-Mahowald Theorem, \cite{mahowaldRingSpectraWhich1979}) There exists a spherical bundle on the topological space $\Omega^2 S^3$ whose Thom spectrum is isomorphic to $\F_2$ as $\E_2$-rings.
	\item (Steinberger's calcualtion, \cite{brunermaymccluresteinbergerEnsuremathH_inftyRingSpectra1986}) The dual Steenrod algebra $\F_2 \otimes \F_2$ is isomorhic to the $\F_2$ homology $\F_2[\Omega^2 S^3]$ of $\Omega^2 S^3$ as $\E_2$-rings.
	\item (B\"okstedt periodicity, \cite{bokstedtTopologicalHochschildHomology1985}) The topological Hochschild Homology $\THH(\F_2)$ of $\F_2$ is isomorphic to $\F_2[\Omega S^3]$ as $\E_1$-$\F_2$-algebras.
	\item (versal characteristic algebra, \cite{antolin-camarenabarthelSimpleUniversalProperty2019}) There exists a $\E_2$-ring with universal property of being the free $\E_2$-ring witnessing a nullhomotopy $2=0$, and that $\E_2$-ring is isomorphic to $\F_2$.
\end{itemize}

There have been attempts in literature to understand various parts of these equivalent statements in motivic settings. The idea is to try to prove or disprove the following question \ref{q} which we think of as asking about the existance of a ``motivic Hopkins-Mahowald theorem''.
\begin{question}\label{q}
	Fix a field $k$. Does there exist a motivic space $X \in \Spc(k)$ and a motivic spherical bundle on $X$ whose associated motivic Thom spectrum in $\SH(k)$ is isomorphic to the motivic Eilenberg-Mac Lane spectrum $\mathrm{M}\F_2$?
\end{question}

In \cite{dundasHochschildHomologyModp2024}, Dundas-Hill-Ormsby-Østvær studied a version of (topological) Hochschild homology they call motivic Hochschild Homology. It is contructed by tensoring with the motivic circle $S^1=S^{1,1}$ in commutative algebras in $\SH(k)$. They then compute motivic Hochschild homology of $\mathrm{M}\F_2$ and get a motivic-Hochschild-Homology-of-$\mathrm{M}\F_2$ calculation containing $\tau$-torsion.

In \cite{bachmannMotivicHochschildHomology2026}, Bachmann-Burklund-Dundas-Østvær introduce another notion of motivic Hochschild homology $\G_m \odot {-}$ that differs from the above notion in two ways. Firstly, the tensor is formed in normed motivic spectra \cite{bachmannNormsMotivicHomotopy2021} rather than commutative algebras in motivic spectra. Secondly, the tensor is taken with the motivic space $\G_m=S^{1,0}$ rather than $S^1=S^{1,1}$. Here the authors compute the motivic Hochschild $\G_m \odot \mathrm{M}\F_2$ of $\mathrm{M}\F_2$ and show that it is the free $\E_1$-$\mathrm{M}\F_2$-algebra $\mathrm{M}\F_2[\mu_{2,1}]$ on a class $\mu_{2,1}$ in bidegre $(2,1)$. This computation of motivic Hochschild homology of $\mathrm{M}\F_2$ looks more like classical B\"okstedt periodicity having no $\tau$-torsion.

Recent work \cite{cranfordGeneralizedThomSpectra2026} of the author has produced $\MU$-relative verions of the Hopkins-Mahowald theorem and B\"oksdedt periodicity. These statements seem much more amendable to motivic lifting methods than the do the classical Hopkins-Mahowald theorem and B\"Okstedt periodicity statement. For example, \cite[Theorem A]{cranfordGeneralizedThomSpectra2026} states that $\F_2$ is the isomorphic to the an $\MU$-line bundle on the space $U$ as $\E_3$-$\MU$-algebras. The motivic lifts of (the classifying space of the base space $U$) $BU$ and (coefficent ring spectrmu of line bundes) $\MU$, motivically refine to the the motivic space $\BGL$ and the motivic spectrum $\MGL$. Motivic objects built out of these enjoy niceness properties among being Tate, cellular, and pure. For these reasons, we are able to record a $\MGL$-relative motivic analogue of the Hopkins-Mahowald theorem as Theorem A while we remain unable to answer, postiviely or negatively, Question \ref{q}. 

\begin{namedthm*}{Theorem A}[Theorem \ref{thm:A}]
	For every prime $p$, there exists an $\E_3$-monoidal $p$-complete $\C$-motivic space map
	\[
		\Omega_{\G_m}\BGL\longrightarrow\BGL_1(\MGL_p^{\wedge})
	\]
	whose associated $\E_3$-$\MGL_p^{\wedge}$-motivic Thom spectrum is isomorphic to $\mathrm{M}\F_p$. Precomposition by the canonical map $\Omega_{\G_m}\BSL\rightarrow\Omega_{\G_m}\BGL$ yields an $\E_3$-monoidal $p$-complete $\C$-motivic space map
	\[
		\Omega_{\G_m}\BSL\longrightarrow\BGL_1(\MGL_p^{\wedge})
	\]
	whose associated $\E_3$-$\MGL_p^{\wedge}$-motivic Thom spectrum is isomorphic to $(\mathrm{M}\Z)_p^{\wedge}$.
\end{namedthm*}

As a corollary we record motivic Thom isomoprhisms associated to Theorem A.

\begin{corollary}
	The first motivic Thom spectrum statement of Theorem A has associated Thom isomoprhism
	\[
		\mathrm{M}\F_p \otimes_{\MGL_{p}^{\wedge}} \mathrm{M}\F_p
		\simeq
		\mathrm{M}\F_p[\Omega_{\G_m} \BGL]
	\]
	of $\E_3$-$\mathrm{M}\F_p$-algebras. The second motivic Thom spectrum statement of Theorem A has associated Thom isomoprhism
	\[
		(\mathrm{M}\Z)_p^{\wedge} \otimes_{\MGL_{p}^{\wedge}} (\mathrm{M}\Z)_p^{\wedge}
		\simeq
		(\mathrm{M}\Z)_p^{\wedge}[\Omega_{\G_m} \BSL]
	\]
	of $\E_3$-$(\mathrm{M}\Z)_p^{\wedge}$-algebras.
\end{corollary}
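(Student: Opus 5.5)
The plan is to deduce the corollary from Theorem A together with the general Thom isomorphism for $R$-module Thom spectra, applied in the presentably symmetric monoidal $\infty$-category $\mathrm{Mod}_{\MGL_p^{\wedge}}(\SH(\C)_p^{\wedge})$. We use the standard construction of Thom spectra: for an $\E_n$-monoidal map $f\colon X\to \BGL_1(R)$ with $R=\MGL_p^{\wedge}$, the Thom spectrum $Mf$ is the colimit of the composite $X\to \mathrm{Pic}(R)\to \mathrm{Mod}_R$. This construction is symmetric monoidal in $f$ (it is a left Kan extension), so $Mf$ is an $\E_n$-$R$-algebra.

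\textbf{Step 1: the general Thom isomorphism.} For such an $f$ we have
\[
Mf\otimes_R Mf \simeq Mf\otimes_R M(f)\simeq Mf[X]
\]
as $\E_n$-$Mf$-algebras ($n\ge 1$). The argument is as follows. Base change along the $\E_n$-map $R\to Mf$ gives $Mf\otimes_R Mf\simeq M(f_{Mf})$, where $f_{Mf}\colon X\to \BGL_1(R)\to \BGL_1(Mf)$ is the composite. The unit map $R\to Mf$ is the Thom spectrum of the restriction of $f$ along the point, so it suffices to show that $f_{Mf}$ is canonically nullhomotopic as an $\E_n$-map. This nullhomotopy is the standard universal property of Thom spectra in the sense of Antol\'in-Camarena–Barthel: $Mf$ is the universal $\E_n$-$R$-algebra $A$ equipped with an $\E_n$-nullhomotopy of $X\to\BGL_1(R)\to\BGL_1(A)$. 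Taking $A=Mf$ and the identity map produces the tautological trivialization. Given a null $\E_n$-map $0\colon X\to\BGL_1(Mf)$, its Thom spectrum is $Mf\otimes\Sigma^\infty_+ X = Mf[X]$, again as $\E_n$-$Mf$-algebras, by symmetric monoidality of $\Sigma^\infty_+$.

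\textbf{Step 2: specialize.} Apply Step 1 with $n=3$ to the two maps of Theorem A. In the first case $X=\Omega_{\G_m}\BGL$ and $Mf\simeq \mathrm{M}\F_p$. In the second case $X=\Omega_{\G_m}\BSL$ and $Mf\simeq(\mathrm{M}\Z)_p^{\wedge}$. This yields the two displayed equivalences. The $\E_3$-structure on the right-hand side comes from the $\E_3$-structure of $\Omega_{\G_m}$ of an $\E_\infty$ (indeed $\G_m$-delooped) motivic space, $p$-completed as in the theorem.

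\textbf{Main obstacle.} The substantive point is checking that the universal-property/Thom-diagonal argument goes through motivically. Specifically, we need $\BGL_1(R)$ and $\mathrm{Pic}$ to make sense for $p$-complete motivic spaces, and we need the colimit defining $Mf$ to be formed in $\mathrm{Mod}_R$, with $X$ regarded as a presheaf of spaces. I would handle this by working in the $\infty$-category of $\E_3$-algebras in $\mathrm{Mod}_R$ and viewing $X$ through its colimit presentation by representables. The Thom functor $\mathrm{Spc}(\C)_{/\BGL_1 R}\to \mathrm{Mod}_R$ is the unique colimit-preserving symmetric monoidal extension, so both the base-change identification and the trivialization follow formally, exactly as in the topological proofs. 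Finally, note that no flatness or K\"unneth input is needed; the equivalence is purely formal once Theorem A is granted.
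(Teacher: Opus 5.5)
There is a genuine gap in Step~1, in the level of structure. The paper records this corollary without a separate proof, so your route (base change plus the tautological orientation) is presumably the intended one. However, Step~1 as you state it, for an arbitrary $\E_n$-Thom spectrum, does not hold at the claimed level.

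\textbf{Where the argument breaks.} If $Mf$ is only an $\E_n$-$R$-algebra, then $\mathrm{GL}_1(Mf)$ is an $\E_n$-space. Consequently:
\begin{itemize}
\item $\BGL_1(Mf)$ is only an $\E_{n-1}$-space;
\item $\mathrm{LMod}_{Mf}$ is only $\E_{n-1}$-monoidal;
\item the base change $Mf\otimes_R(-)$ is only $\E_{n-1}$-monoidal.
\end{itemize}
So neither ``$f_{Mf}$ is canonically $\E_n$-nullhomotopic'' nor ``the Thom spectrum of the null map is $Mf[X]$ as $\E_n$-$Mf$-algebras'' makes sense. The Antol\'in-Camarena--Barthel universal property gives an $\E_n$-trivialization in $\BGL_1(A)$ only when $\BGL_1(A)$ is an $\E_n$-space, for instance when $A$ is $\E_{n+1}$. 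Taking $A=Mf$ is exactly the case where this fails. Carried out carefully, your argument yields an equivalence of $\E_{n-1}$-$Mf$-algebras, which is $\E_2$ here, not $\E_3$.

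\textbf{The general $\E_n$ statement is false.} Take $n=1$, $R=\mathbb{Q}[t]$ with $|t|=2$, and $f\colon\Omega S^4\to\BGL_1(R)$ the $\E_1$-extension of $t$. Then $Mf=R/\!/_{\E_1}t$, modelled by $R\langle h\rangle$ with $dh=t$ and $|h|=3$, so $\pi_*Mf=\mathbb{Q}[h^2]$. Suppose there were an $\E_1$-equivalence $Mf\otimes_R Mf\simeq Mf\otimes_R R[\Omega S^4]$ under the left copy of $Mf$.
\begin{itemize}
\item Composing with $\mathrm{id}\otimes\epsilon$ would give an $\E_1$-map $Mf\otimes_R Mf\to Mf$ restricting to the identity.
\item That is the same as an $\E_1$-map $Mf\to\mathrm{HH}^{\bullet}_R(Mf)$.
\item Such a map would force $t\cdot 1=0$ in $\pi_2\mathrm{HH}^{\bullet}_R(Mf)$.
\end{itemize}
But computing $\mathrm{HH}^{\bullet}_R(Mf)$ as the fiber of $[h,-]\colon Mf\to\Sigma^{-3}Mf$ shows that $t\cdot 1$ is the nonzero class $\pm 2h^2$ coming from $\pi_6(Mf)$.

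\textbf{How to repair it here.} You need the input that makes $\E_3$ available in this specific case. First, $\mathrm{M}\F_p$ and $(\mathrm{M}\Z)_p^{\wedge}$ are $\E_\infty$-$\MGL_p^{\wedge}$-algebras, via the normed maps $\MGL\to\mathrm{M}\Z\to\mathrm{M}\F_p$. Second, Theorem A must be read as identifying the Thom spectrum with the underlying $\E_3$-$\MGL_p^{\wedge}$-algebra of this $\E_\infty$-algebra $E$.

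Then orient with respect to $E$ rather than $Mf$:
\begin{itemize}
\item $E\otimes_R(-)\colon\mathrm{Mod}_R\to\mathrm{Mod}_E$ is symmetric monoidal.
\item Hence $E\otimes_R Mf$ is the Thom spectrum of the $\E_3$-map $X\to\BGL_1(R)\to\BGL_1(E)$, whose target is an $\E_\infty$-space.
\item The $\E_3$-$R$-algebra equivalence $Mf\simeq E$ supplies, via the universal property, an $\E_3$-trivialization of this composite.
\end{itemize}
Therefore $E\otimes_R Mf\simeq E[X]$ as $\E_3$-$E$-algebras, which is the corollary. You still need the motivic version of the Thom-spectrum adjunction that you flag, but that is not where your argument breaks.
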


As a second corllary to Theorem A, we remark on the Hopkins-Hoyois-Morel theorem \cite{hoyoisAlgebraicCobordismMotivic2015}. It is the motivic analogue of the Lazard ring calculation. We record the following $\E_3$-monoidal $\C$-moivic analogue of the Hopkins-Hoyois-Morel theorem.

\begin{corollary}[Corollary \ref{cor:HHM}]\label{intro:HHM}
	Let $a_i$ denote the generators of the Lazard ring $(\MGL_{p, \ast,\ast}^{\wedge})_0\cong \Z_p^{\wedge}[a_1, a_2, \dots]$ where $\deg(a_i)=(2i,i)$. Then there exists an equivalence
	\[
		\MGL_p^{\wedge}/(a_1,a_2,\dots)\simeq(\mathrm{M}\Z)_p^{\wedge}
	\]
	of $\E_3$-$\MGL_p^{\wedge}$-algebras. 
	Furthermore there exists an equivalence
	\[
		\MGL_p^{\wedge}/(p, a_1,a_2,\dots)\simeq\mathrm{M}\F_p
	\]
	of $\E_3$-$\MGL_p^{\wedge}$-algebras.
\end{corollary}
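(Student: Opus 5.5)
The plan is to deduce the corollary from Theorem A by identifying the $\E_3$-$\MGL_p^{\wedge}$-Thom spectrum over $\Omega_{\G_m}\BSL$ (resp. $\Omega_{\G_m}\BGL$) with an iterated quotient of $\MGL_p^{\wedge}$ that has the correct universal property. Here "quotient" means $\E_3$-quotient. The model is the $\MU$-relative argument of \cite{cranfordGeneralizedThomSpectra2026}. There, $\MU/(x_1,x_2,\dots)\simeq\Z$ is read off from the fact that $\Omega^2 \mathrm{SU}\simeq \Omega \mathrm{BSU}\simeq\Omega^2 BU\langle 4\rangle$ is built as a free $\E_3$-space on cells.

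The first step is to fix what $\MGL_p^{\wedge}/(a_1,a_2,\dots)$ means as an $\E_3$-$\MGL_p^{\wedge}$-algebra. I would define it as the $\E_3$-$\MGL_p^{\wedge}$-Thom spectrum of the $\E_3$-map
\[
\mathrm{Free}_{\E_3}\Bigl(\bigvee_i S^{2i+1,i}\Bigr)\longrightarrow \BGL_1(\MGL_p^{\wedge})
\]
that on the $i$-th sphere picks out $a_i$. Under $S^{2i+1,i}\to\BGL_1$, the element $a_i\in\pi_{2i,i}\MGL$ gives a loop. Equivalently, it is the pushout of $\E_3$-algebras $\MGL_p^{\wedge}\otimes_{\MGL_p^{\wedge}[S^{2i,i}]}\MGL_p^{\wedge}$ killing each $a_i$. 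For $\mathrm{M}\F_p$ one adds a cell $S^{1,0}$ carrying $p$.

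Next, I would compare the free $\E_3$-space with $\Omega_{\G_m}\BSL$. $\C$-motivically and after $p$-completion, $\BSL$ is cellular. Its $\G_m$-loops should be an $\E_3$-space generated by cells $S^{2i+1,i}$; this is the motivic analogue of the Bott-periodic cell structure of $\Omega \mathrm{SU}$, whose homology is $\Z[b_1,b_2,\dots]$. In this analogy the $\E_2$-structure enhances to $\E_3$ via Bott periodicity $\Omega_{\mathbb{P}^1}\BGL\simeq \Z\times\BGL$. The Thom map of Theorem A restricted to the generating cells must send the $i$-th cell to the class $a_i$, up to decomposables and units. I would check this on $\mathrm{M}\Z$-homology via the Thom isomorphism of the previous corollary: $\pi_{\ast,\ast}$ of $(\mathrm{M}\Z)_p^{\wedge}\otimes_{\MGL_p^{\wedge}}(\mathrm{M}\Z)_p^{\wedge}$ is an exterior/divided algebra on suspensions $\sigma a_i$. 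These must match the primitive generators of $\mathrm{M}\Z[\Omega_{\G_m}\BSL]$.

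Then I would build an $\E_3$-map $\MGL_p^{\wedge}/(a_1,\dots)\to(\mathrm{M}\Z)_p^{\wedge}$. This exists because each $a_i$ maps to zero in $\pi_{2i,i}(\mathrm{M}\Z)_p^{\wedge}=0$ (Hopkins–Morel–Hoyois), and the obstructions to $\E_3$-null-homotopies lie in the homotopy of $\mathrm{M}\Z$ in bidegrees that vanish over $\C$. To prove it is an equivalence, both sides are cellular/Tate, so it suffices to check on $\mathrm{M}\F_p$-homology, or after base change along $\MGL_p^{\wedge}\to\mathrm{M}\F_p$. There both become free $\E_3$-$\mathrm{M}\F_p$-algebras on the same cells. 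The mod $p$ case is identical with the extra cell for $p$, using $\Omega_{\G_m}\BGL\simeq\G_m$-factor times $\Omega_{\G_m}\BSL$ $p$-adically.

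I expect the main obstacle to be the second step. One has to show that $\Omega_{\G_m}\BSL$ is the free $\E_3$-space on those cells, or at least that its $\E_3$-homology has generators exactly in degrees $(2i+1,i)$ hit by the $a_i$. This requires a motivic Bott/Snaith-type splitting that I would attempt by base change from the topological statement via Betti realization, combined with cellularity and $\tau$-completeness, which is conservative for $p$-complete cellular $\C$-motivic objects.
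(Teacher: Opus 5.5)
There is a genuine gap: with your definition of the quotient, the equivalence you want is false, so Step 2 and the final comparison cannot be carried out.

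\textbf{Why the free $\E_3$-quotient is the wrong object.} Suppose $\MGL_p^{\wedge}/(a_1,a_2,\dots)$ means the free $\E_3$-quotient, i.e.\ the Thom spectrum $T(F)$ over $F=\mathrm{Free}_{\E_3}(\bigvee_i S^{2i+1,i})$. If $T(F)\simeq(\mathrm{M}\Z)_p^{\wedge}$, the Thom isomorphism would give $\mathrm{M}\Z[F]\simeq\mathrm{M}\Z\otimes_{\MGL}\mathrm{M}\Z\simeq\mathrm{M}\Z[\Omega_{\G_m}\BSL]$. Betti realization then rules this out:
\begin{itemize}
\item By Lemma~\ref{lem:loop_spaces}, $\Omega_{\G_m}\BSL\simeq B_s\BGL$ realizes to $B(\BU)\simeq SU$, with $H_*(SU)=\Lambda(x_3,x_5,\dots)$.
\item Realization commutes with free $\E_3$-algebras, so $\mathrm{Re}(F)\simeq\Omega^3\bigvee_{i\geq1}S^{2i+4}$.
\item Already rationally in degree $8$, $H_8(\mathrm{Re}F;\mathbb{Q})$ is spanned by $x_3x_5$ and the Browder bracket $\lambda(x_3,x_3)$. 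The latter is detected by $\pi_{11}(S^6)\otimes\mathbb{Q}\neq0$. By contrast, $H_8(SU;\mathbb{Q})$ is spanned by $x_3x_5$ alone.
\item Mod $2$, $x_3^2\neq0$ in the free algebra but not in $H_*(SU)$.
\end{itemize}
So $\Omega_{\G_m}\BSL$ is not free on these cells. It also needs $\E_3$-cells in other degrees to impose relations, so even the weaker ``$\E_3$-generators in degrees $(2i+1,i)$'' version does not help.

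The same defect breaks your last step. Base change gives $\mathrm{M}\F_p\otimes_{\MGL}T(F)\simeq\mathrm{M}\F_p[F]$, which is free. But $\mathrm{M}\F_p\otimes_{\MGL}(\mathrm{M}\Z)_p^{\wedge}$ has homotopy $\pi_{*,*}(\mathrm{M}\F_p)\otimes\Lambda(\sigma a_1,\sigma a_2,\dots)$, which realizes to $H_*(SU;\F_p)$. The $\E_3$-map out of the free quotient does exist, since $\pi_{2i,i}\mathrm{M}\Z=0$, but it is not an equivalence.

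The confusion is already visible in your topological model. The spaces $\Omega^2SU\simeq U$, $\Omega BSU\simeq SU$ and $\Omega^2BU\langle4\rangle\simeq BU$ are three different spaces. The polynomial homology $\Z[b_1,b_2,\dots]$ belongs to $\Omega SU\simeq BU$, the \emph{loop space} of the base, not to the base $\Omega_{\G_m}\BSL$.

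\textbf{What the paper does instead.} It uses no universal property. The $\E_3$-structure on ``$\MGL/(a_1,a_2,\dots)$'' is the one carried by the presentation $\mathrm{M}\Z\simeq\MGL\otimes_{\MGL[\BGL]}\MGL$ from the proof of Theorem~\ref{thm:A}; this is the Thom-object lemma with base $B_s\BGL$ and loop space $\BGL$. The argument then runs as follows:
\begin{itemize}
\item $\pi_{*,*}\MGL[\BGL]\cong\MGL_{*,*}[b_1,b_2,\dots]$ with $|b_i|=(2i,i)$.
\item The Thom-twisted augmentation $\MGL[\BGL]\to\MGL$ is surjective and sends $b_i$ to a Lazard generator $a_i$ modulo decomposables. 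This is where Hopkins--Hoyois--Morel enters.
\item A Koszul/Tor computation then identifies the relative tensor product with the regular quotient $\MGL/(a_1,a_2,\dots)$.
\item For $\mathrm{M}\F_p$ one uses $K\simeq\Z\times\BGL$ in place of $\BGL$, and the extra $\Z$-factor accounts for the relation $p$.
\end{itemize}
Your expectation that the Thom map should hit the $a_i$ is exactly the needed input. It has to be applied to the classes $b_i\in\pi_{2i,i}\MGL[\BGL]$ in this bar-construction presentation, not to $\E_3$-cells of $\Omega_{\G_m}\BSL$.
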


\begin{remark}
	We note that our proof of Corollary \label{intro:HHM} itself depends on the Hopkins-Hoyois-Morel theorem.
\end{remark}

For our other main theorem, Theorem B, for $p=2$, we record $\G_m \odot {-}$ assoicated to the motivic Thom spectra of Theorem A. We warn that $\G_m \odot {-}$ are defined only for normed motivic spectra and the Thom spectrum statements of theorem A are not stated in $\E_{\infty}$ form and thus cannot be interpreted in a normed way. Instead, we utilize the corresponding Thom isomoprhisms on the level of modules and largely follow the work of \cite{bachmannMotivicHochschildHomology2026} to prove Theorem B. We make sense of $\MGL_p^{\wedge}$-relative $\G_m \odot^{\MGL} {-}$ by utilizing maps of normed spectra $\MGL_p^{\wedge}\rightarrow {-}$ as studied in \cite{bachmannNormsMotivicHomotopy2021}.

\begin{namedthm*}{Theorem B}
	There exists an equivalence
	\[
		\G_m \odot^{\MGL_2^{\wedge}}\mathrm{M}\F_2
		\simeq
		\mathrm{M}\F_2[\BGL]
	\]
	of $\E_2$-$\mathrm{M}\F_2$-algberas. Furthermore there exists an equivalence
	\[
		\G_m \odot^{\MGL_2^{\wedge}}\mathrm{M}\Z_2^{\wedge}
		\simeq
		(\mathrm{M}\Z)_2^{\wedge}[\BSL]
	\]
	of $\E_2$-$(\mathrm{M}\Z)_2^{\wedge}$-algberas.
\end{namedthm*}

Whereas the main theorem of \cite{bachmannMotivicHochschildHomology2026} shows that $\G_m \odot \mathrm{M}\F_2$ is the free $\E_1$-$\mathrm{M}\F_2$-algebra $\mathrm{M}\F_2[\mu_{2,1}]$ on a single generator $\mu_{2,1}$, we conclude the following corollary of Theorem B.

\begin{corollary}
	We have that $\G_m \odot^{\MGL_2^{\wedge}} \mathrm{M}\F_2$ is the product
	\[
		\mathrm{M}\F_2[\mu_{2,1}] \otimes
		\mathrm{M}\F_2[\mu_{4,2}] \otimes
		\mathrm{M}\F_2[\mu_{6,3}] \otimes
			\cdots
	\]
	of free $\E_1$-$\mathrm{M}\F_2$-algebras $\mathrm{M}\F_2[\mu_{2n,n}]$ on classes $\mu_{2n,2}$ of bidegree $(2n,n)$. LIkewise $\G_m \odot^{\MGL_2^{\wedge}} (\mathrm{M}\Z)_2^{\wedge}$ is the product
	\[
		\mathrm{M}\F_2[\mu_{4,2}] \otimes
		\mathrm{M}\F_2[\mu_{6,3}] \otimes
		\mathrm{M}\F_2[\mu_{8,4}] \otimes
		\cdots
	\]
	of free $\E_1$-$\mathrm{M}\F_2$-algebras $\mathrm{M}\F_2[\mu_{2n,n}]$ on classes $\mu_{2n,2}$ of bidegree $(2n,n)$, starting with $(2n,n)=(4,2)$.
\end{corollary}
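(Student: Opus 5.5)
The plan is to deduce the corollary from Theorem B by computing the group-ring spectra $\mathrm{M}\F_2[\BGL]$ and $(\mathrm{M}\Z)_2^{\wedge}[\BSL]$ as $\E_1$-algebras. (The second product is presumably meant with $(\mathrm{M}\Z)_2^{\wedge}$-coefficients, with $\mu_{2n,n}$ in bidegree $(2n,n)$.) The input I would use is the cellular, pure Tate structure of $\BGL$ over $\C$. The motivic homology of $\BGL$ with coefficients in $\mathrm{M}\F_2$, or in $(\mathrm{M}\Z)_2^{\wedge}$, is a free module over the coefficients. As a coalgebra it is dual to $A[[c_1,c_2,\dots]]$, with Chern classes $c_n$ in bidegree $(2n,n)$ and $A$ the coefficient ring. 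For $\BSL$ the same holds with $c_1$ omitted.

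First I would identify the homology ring. The Whitney sum (direct sum) $\E_\infty$-structure on $\BGL$ makes $A[\BGL]$ a commutative bialgebra. Dualizing the splitting principle, through $\BGL_1=\P^\infty\to\BGL$, gives a ring isomorphism $\pi_{\ast,\ast}A[\BGL]\cong \pi_{\ast,\ast}A[b_1,b_2,\dots]$, with $b_n$ the image of the dual of $c_1^n$ from $\P^\infty$. This is the motivic analogue of the classical computation of $H_\ast(\BU)$. It holds because everything is pure Tate and the Künneth theorem applies. For $\BSL$ I would instead use the fibration $\BSL\to\BGL\xrightarrow{\det}\P^\infty$. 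The homology of $\BSL$ should be the subalgebra cut out by the vanishing of $c_1$, which is again polynomial, now on generators in bidegrees $(2n,n)$ with $n\ge2$.

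Next I would lift the ring isomorphism to $\E_1$-algebras. For each $n$ I choose a map $S^{2n,n}\to A[\BGL]$ representing the generator $\mu_{2n,n}:=b_n$. The free $\E_1$-$A$-algebra property gives an $\E_1$-map $A[\mu_{2n,n}]\to A[\BGL]$. Because $A[\BGL]$ is $\E_2$, hence homotopy commutative, these maps can be multiplied together. The multiplication map from the infinite tensor product, formed as a colimit of the finite tensor products, is therefore a map of $A$-modules, and I would upgrade it to a map of $\E_1$-algebras. Each free factor $A[\mu_{2n,n}]$ has homotopy $\pi_{\ast,\ast}A[\mu_{2n,n}]$ equal to a polynomial ring on $\mu_{2n,n}$, since the free $\E_1$-algebra on one cell has homotopy the tensor algebra on one generator. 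The map therefore induces an isomorphism on bigraded homotopy. Both sides are cellular $A$-modules, so it is an equivalence. Composing with Theorem B gives the corollary.

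The main obstacle is the $\E_1$-structure on the infinite tensor product. Strictly speaking, an $\E_2$-target is needed to assemble a tensor product of $\E_1$-maps into an $\E_1$-map, and this is exactly what Theorem B provides. If that argument proves too fiddly, I would fall back on stating the product as an equivalence of $A$-modules that is compatible with the homotopy ring structures. The secondary issue is the mod $2$ and $2$-adic coefficient computation over $\C$, including $\tau$: purity ensures the homology is free over $\pi_{\ast,\ast}A$, so no $\tau$-torsion appears, and this is what makes the generators polynomial.
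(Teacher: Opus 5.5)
Your proposal is correct and follows the route the paper intends. The paper gives no separate argument: it reads the corollary off from Theorem B together with the polynomial structure of $\pi_{\ast,\ast}\mathrm{M}\F_2[\BGL]$ (resp.\ $\pi_{\ast,\ast}(\mathrm{M}\Z)_2^{\wedge}[\BSL]$). Your assembly of the free $\E_1$-factors via the $\E_2$-multiplication, checked on bigraded homotopy of cellular modules, supplies exactly the details the paper leaves out.

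The one step you assert rather than prove is polynomiality in the $\BSL$ case. It is true, but it is not a matter of discarding $b_1$: the $(4,2)$ generator is $b_1^2-2b_2$. It follows, using purity, from identifying the Chow-degree-zero subring $\bigoplus_n \pi_{2n,n}$ with $H_\ast(\BSU;\Z_2^{\wedge})$, together with the classical fact that $H_\ast(\BSU;\Z)$ is polynomial on generators in degrees $2n$, $n\geq 2$.
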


\begin{remark}
	Our proof of Theorem B, like the proof of the main theorem of \cite{bachmannMotivicHochschildHomology2026}, depends on forthcoming work of Bachmann-Hopkins regarding $\mathrm{M}\F_2$ and $(\mathrm{M}\Z)_2^{\wedge}$ having normed motivic structured after modding out by $\tau$.
\end{remark}

\subsection*{Conventions}
We say category to mean $\infty$-category=$(\infty,1)$-category. For the remainder of this paper we fix base field $\C$. We refer to the motivic spectra category as $\SH(\C)$ and the motivic space category as $\Spc(\C)$. We refer to the classifying space construction as $B_s$ to emphasize its simplicial nature. We refer to the $\G_m$-loop space, $S^1$-loop space, and $\P^1$-loop space as $\Omega_{\G_m}$, $\Omega_{S^1}$, and $\Omega_{\P^1}$, respectively. When we write $\F_p$, we either mean the discrete commutative ring or the Eilenberg-Mac Lane spectrum whereas we (try to) always write $\mathrm{M}\F_p$ for the motivic Eilenberg-Mac Lane spectrum. In sections \ref{sec:math} and \ref{sec:MHH} we always $p$-complete everything, and in section \ref{sec:MHH}, $p=2$ always. We warn that we use $\BGL$ to both denote the motivic space $\BGL$ as well as the space $\BGL \subset \Pic$ that is the connected component of $\Pic$ containing the identity. We write $\BSL$ to denote the motivic space $\BSL$ and well as write $K$ to denote the algebraic K-theory motivic space $\Z \times \BGL$. We write $\mathrm{Re}$ to denote Betti realization.

\subsection*{Acknowledgements}
The author has benefited from conversation on this paper  with Sanath Devalapurkar. This work was supported by the National Science Foundation grant DGE-2234667 and Simons Foundation grant SDF-00014770.

\subsection*{Large Language Model Useage Disclosure}
The author used several OpenAI LLM models, including GPT-Sol 6.1, for feedback on writing, both mathematically and expositionally. Some proofs in this paper reflect feedback from the LLM model. Any LLM feedback that has influenced this paper has been only carefully and skeptically considered. 
The writing in this paper is entirely human generated, and the large scale mathematical ideas of this paper were entirely human generated.  The author maintains responsibility for the accuracy of the entirety of this paper.

\section{Generalized $\C$-Motivic Thom Spectra}\label{sec:math}
The purpose of this section is to prove Theorem A concerning motivic Thom spectra. We begin by recalling facts.
\begin{recollection}
The functor
\[
	\fil_{\ev}\colon
	\Alg_{\E_n}\left(\Mod_{\fil_{\ev}(\S)}\Fil\Sp\right)
	\rightarrow
	\Alg_{\E_n}\left(\SH(\C)^{\cell}\right)
\]
given by Pstragowski's even filtration \cite{pstragowskiPerfectEvenModules2023} induces an equivalence after $p$-completion between even $\MU$-synthetic spectra, modeled by modules filtered spectra on $\fil_{\ev}^{\ast}(\S)$ and cellular motivic spectra.
\end{recollection}

The following fact is standard.
\begin{lemma}
	Let $\mathcal{C}$ denote a presentably symmetric monoidal category, $n>0$,  and $f:X\rightarrow \Pic(\mathcal{C})$ denote an $\E_n$-monoidal space map where $X$ is grouplike and connected. Then the Thom object $T(f)$ associated to the colimit of $f$ identifies, as $\E_{n}$-algebras in $\mathcal{C}$, with
	\[
		\id_{\mathcal{C}} \otimes_{\id_{\mathcal{C}[\Omega X]}} \id_{\mathcal{C}}.
	\]
\end{lemma}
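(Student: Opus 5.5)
Since $X$ is grouplike and connected, we write $X \simeq B_s\Omega X$, where $\Omega X$ is an $\E_{n+1}$-monoidal grouplike space; for $n\ge 1$ the loop space carries one more level of structure than $X$. By the delooping equivalence, the $\E_n$-map $f$ corresponds to an $\E_{n+1}$-map $\Omega f\colon \Omega X \to \Omega\Pic(\mathcal{C}) = \mathrm{GL}_1(\id_{\mathcal{C}})$. The plan is to express the colimit of $f$ over $X = B_s\Omega X$ as a geometric realization of the bar construction, and then to identify that bar construction with a relative tensor product over the group algebra $\id_{\mathcal{C}}[\Omega X]$.

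\textbf{Step 1: the bar construction.} Write $X = \operatorname{colim}_{\Delta^{op}} (\Omega X)^{\times \bullet}$. Colimits in $\mathcal{S}_{/X}$ are computed on total spaces, and the Thom functor $\mathcal{S}_{/\Pic(\mathcal{C})} \to \mathcal{C}$ (left Kan extension) preserves colimits. Together these give
\[
T(f) \simeq \big|\, T\big(f|_{(\Omega X)^{\times\bullet}}\big) \big| .
\]
Each simplicial level is a Thom object over $(\Omega X)^{\times k}$. The restriction of $f$ there is the composite $(\Omega X)^{\times k} \to X$ through the bar structure maps, and it is canonically trivialized because every such map factors through the basepoint up to the nullhomotopies built into the bar construction. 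Hence the levels are $\id_{\mathcal{C}}[\Omega X]^{\otimes k}$. The face maps that use the ends involve the twisting: $\Omega f$ defines an augmentation $\id_{\mathcal{C}}[\Omega X] \to \id_{\mathcal{C}}$, namely the Thom object of $\Omega f$ composed with $\mathrm{GL}_1 \to$ units. In the case relevant for the statement this is the twisted augmentation by which $\id_{\mathcal{C}}$ is viewed as a module on both sides. The simplicial object is therefore $\mathrm{Bar}(\id_{\mathcal{C}}, \id_{\mathcal{C}}[\Omega X], \id_{\mathcal{C}})$, whose realization is $\id_{\mathcal{C}} \otimes_{\id_{\mathcal{C}}[\Omega X]} \id_{\mathcal{C}}$.

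\textbf{Step 2: multiplicativity.} I will upgrade Step 1 to an equivalence of $\E_n$-algebras. I would use that the Thom functor restricted to $\E_m$-monoidal maps is symmetric monoidal and colimit-preserving, so it is compatible with the Dunn additivity decomposition $\E_{n+1} \simeq \E_1 \otimes \E_n$. The group algebra $\id_{\mathcal{C}}[\Omega X]$, twisted by $\Omega f$, is an $\E_{n+1}$-algebra. The twisted augmentation to $\id_{\mathcal{C}}$ is a map of $\E_{n+1}$-algebras, because $\id_{\mathcal{C}}$ is the Thom object of the $\E_{n+1}$-map from the point. The relative tensor product of $\E_{n+1}$-algebras over an $\E_{n+1}$-algebra is then an $\E_n$-algebra. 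On the other side, $B_s$ takes $\E_{n+1}$-groups to $\E_n$-groups, and the simplicial presentation of $B_s$ is one of $\E_n$-monoidal spaces, since the bar construction in $\Alg_{\E_n}$ is computed by the underlying bar construction for $\E_1$-actions. Since the Thom functor is symmetric monoidal and colimit-preserving, the identification of Step 1 is one of $\E_n$-algebras.

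The main obstacle I expect is Step 2, the coherence: showing that the $\E_n$-structure on the bar construction agrees with the one that the Thom functor transports from $B_s\Omega X \simeq X$. I would handle this abstractly rather than by hand. The Thom functor $\mathcal{S}_{/\Pic(\mathcal{C})} \to \mathcal{C}$ is symmetric monoidal and colimit-preserving, so it induces a colimit-preserving functor on $\Alg_{\E_n}$, and it carries the $\E_n$-bar construction $\mathrm{pt} \times_{\Omega X} \mathrm{pt}$ in $\Alg_{\E_{n+1}}(\mathcal{S}_{/\Pic})$ to the corresponding bar construction in $\mathcal{C}$. The point to check carefully is that, for grouplike connected $X$, the relative tensor product in $\mathcal{S}_{/\Pic}$ recovers $X \to \Pic$; this is exactly the delooping equivalence.
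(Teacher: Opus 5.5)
The paper gives no proof of this lemma; it calls the fact standard. Your strategy is the standard argument. You present $X\simeq B_s\Omega X$ as the realization of a bar construction in $\E_n$-spaces over $\Pic(\mathcal{C})$, using Dunn additivity, and push it through the colimit-preserving symmetric monoidal Thom functor. Your final paragraph handles the $\E_n$-coherence correctly.

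There is, however, a concrete error in Step 1: you make $\id_{\mathcal{C}}$ a module via the twisted augmentation \emph{on both sides}. Trivialize $f$ on $(\Omega X)^{\times k}$ through vertex $0$, using the cocone datum for $[0]\to[k]$. The faces $d_i$ with $i\geq 1$ fix vertex $0$ and are untwisted. The face $d_0$ moves the base vertex along the edge labelled by the first coordinate $g_1$, so it picks up $\Omega f(g_1)\in\mathrm{Aut}(\id_{\mathcal{C}})$. Hence exactly one end carries the twisted augmentation $\id_{\mathcal{C}}[\Omega X]\to\id_{\mathcal{C}}$, the adjoint of $\Omega f$ under $\id_{\mathcal{C}}[-]\dashv\mathrm{GL}_1$. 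The other end carries the trivial augmentation induced by $\Omega X\to\mathrm{pt}$.

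Twisting both ends by the same $\chi=\Omega f$ cancels. After converting the right action to a left one, $g$ acts on $\id_{\mathcal{C}}\otimes\id_{\mathcal{C}}\simeq\id_{\mathcal{C}}$ by $\chi(g)^{-1}\chi(g)=1$, since $\mathrm{Aut}(\id_{\mathcal{C}})$ is commutative. You would therefore get $\id_{\mathcal{C}}[X]$, the Thom object of the constant map. For example, take $\Mod_{\Z}$, $X=B\Z\simeq S^1$, and $f$ the $\E_\infty$-map classifying the sign character $\Z\to\{\pm1\}=\mathrm{GL}_1(\Z)$. Resolving by $\Z[t^{\pm1}]\xrightarrow{t+1}\Z[t^{\pm1}]$, the one-sided twist gives $\mathrm{cofib}(2)=\Z/2$, which is the correct Thom object. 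The two-sided twist gives $\mathrm{cofib}(0)=\Z\oplus\Sigma\Z=\Z[S^1]$.

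Step 2 inherits this error and needs the same correction. In $\Alg_{\E_{n+1}}(\mathcal{S})_{/\Pic(\mathcal{C})}$, such maps $(\Omega X,\mathrm{const})\to(\mathrm{pt},\id_{\mathcal{C}})$ correspond to points of $\mathrm{Map}_{\E_{n+1}}(\Omega X,\Omega\Pic(\mathcal{C}))$. Of the two from which you form the bar construction, one must be the trivial map and the other the one whose homotopy datum is $\Omega f$. With that choice, the delooping check you single out at the end goes through. With both maps twisted, the twists cancel and the realization lies over the constant map $X\to\Pic(\mathcal{C})$.

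Relatedly, the twisted augmentation is an $\E_{n+1}$-map because $\Omega f$ is an $\E_{n+1}$-map into $\mathrm{GL}_1(\id_{\mathcal{C}})$. Your stated reason, that $\id_{\mathcal{C}}$ is the Thom object of the point, does not establish this.

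Part of the confusion comes from the statement itself: its notation $\id_{\mathcal{C}}\otimes_{\id_{\mathcal{C}}[\Omega X]}\id_{\mathcal{C}}$ suppresses the module structures. Read it as ``one side twisted by $\Omega f$, the other trivial''. With that reading and these fixes, your argument is a complete proof.
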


From \cite{cranfordGeneralizedThomSpectra2026} we recall there there exists an $\E_3$-space map $U\rightarrow \BGL_1(\MU)$ whose $\E_3$-$\MU$-Thom spectrum is isomorphic to $\F_2$ and furthermore precomposing with $SU \hookrightarrow U$ yields an $\E_3$-$\MU$-space map $SU\rightarrow \BGL_1(\MU)$ whose $\E_3$-$\MU$-Thom spectrum is isomorphic to $\Z$.

\begin{lemma}\label{lem:realize}
	Suppose $X$ is a motivic space whose motivic suspension spectrum $\Sigma_+^{\infty}X$ is an even finite $\MGL$-projective motivic spectrum. Then the natural map
	\[
		\MGL[X]
		\rightarrow
		\fil_{\ev}(\MU[\mathrm{Re}(X)])
	\]
	is an equivalence of $\E_{\infty}$-$\MGL$-algebras.
\end{lemma}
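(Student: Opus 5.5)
First I make the natural map explicit. Betti realization $\mathrm{Re}\colon \SH(\C)\to\Sp$ is symmetric monoidal and sends $\MGL$ to $\MU$. So $\MGL[X]=\MGL\otimes\Sigma^\infty_+X$ realizes to $\MU[\mathrm{Re}(X)]$ as $\E_\infty$-$\MU$-algebras. Within $p$-complete cellular objects, $\fil_{\ev}$ is right adjoint to realization; this follows from the Recollection, where cellular motivic spectra are identified with $\fil_{\ev}(\S)$-modules and realization is inverting $\tau$ (taking the colimit of the filtration). The map in the statement is the unit of this adjunction evaluated at $\MGL[X]$. It is lax symmetric monoidal, hence a map of $\E_\infty$-algebras, and it is $\MGL$-linear since $\fil_{\ev}(\MU)\simeq\MGL$. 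Because a map of $\E_\infty$-$\MGL$-algebras is an equivalence exactly when its underlying map of $\MGL$-modules is, it suffices to check the module statement.

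Next I reduce to free modules. By hypothesis, $\MGL[X]$ is an even finite projective $\MGL$-module: a retract of a finite sum $\bigoplus_i \Sigma^{2n_i,n_i}\MGL$. The unit map is natural and compatible with retracts and finite sums. Finite sums pass through $\fil_{\ev}$ because it is exact on even modules and $\fil_{\ev}$ of a finite sum is the sum. So it is enough to treat $\Sigma^{2n,n}\MGL$. Its realization is $\Sigma^{2n}\MU$, and $\fil_{\ev}(\Sigma^{2n}\MU)\simeq\Sigma^{2n,n}\fil_{\ev}(\MU)\simeq\Sigma^{2n,n}\MGL$: the even filtration of a shifted even $\MU$-module is the shift (with weight) of the even filtration, and $\fil_{\ev}(\MU)\simeq\MGL$ after $p$-completion by the Recollection (Gheorghe–Isaksen–Krause–Ricka / Pstrągowski). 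Under these identifications the unit is the identity.

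\textbf{Main obstacle.} The delicate point is that "the natural map" is well defined and agrees with the shift identification, i.e.\ that the unit is compatible with the $\Sigma^{2,1}\leftrightarrow\Sigma^{2}$ shifts. I would handle this by noting that $\Sigma^{2,1}\MGL\simeq \MGL\otimes \P^1$, that $\mathrm{Re}(\P^1)=S^2$, and that $\fil_{\ev}$ sends $\MU\otimes S^2$ to $\MGL\otimes\Sigma^{2,1}$ because $\P^1$ is a cell in even degree. A secondary point is the implicit $p$-completion, which I will keep in force throughout as the Section's convention.
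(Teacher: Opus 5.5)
There is a genuine gap in your first paragraph. You claim that on $p$-complete cellular objects $\fil_{\ev}$ is right adjoint to Betti realization. That is false, and the Recollection in fact implies the opposite. In the filtered model, realization is the colimit of the filtration (inverting $\tau$). Its right adjoint is the constant filtration, i.e.\ the $\tau$-periodic object, not the even filtration. For instance, the unit of that adjunction at $\MGL$ is $\MGL\to\MGL[\tau^{-1}]$, which is not an equivalence since $\MGL/\tau\neq 0$.

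So the map you analyze is not the map of the lemma. Your assertion that ``under these identifications the unit is the identity'' on $\Sigma^{2n,n}\MGL$ is false for the adjunction you wrote down. Everything downstream rests on this: the $\E_\infty$-structure, the naturality used to reduce to free summands, and the identification on free modules.

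The argument can be repaired by restricting to Chow-connective objects, i.e.\ those built from $\Sigma^{2n+k,n}$ with $k\geq 0$. For an even $\MU$-module $Y$, $\fil_{\ev}(Y)=\tau_{\geq 2\ast}Y$ is the connective cover of the $\tau$-periodic right adjoint. Hence $\mathrm{Map}(M,\fil_{\ev}Y)\simeq\mathrm{Map}(\mathrm{Re}\,M,Y)$ for connective $M$. To check this on a generator $\Sigma^{2n+k,n}$, note both sides are $\Omega^\infty\Sigma^{-2n-k}$ of $\tau_{\geq 2n}Y$ and of $Y$ respectively, and these agree because $k\geq 0$. Moreover $\MGL[X]$ is connective, being a retract of $\bigoplus_i\Sigma^{2n_i,n_i}\MGL$.

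With this restricted adjunction your unit is well defined and symmetric monoidal. Since $\mathrm{Re}\circ\fil_{\ev}\simeq\id$ on even $\MU$-modules, the triangle identity shows the unit is an equivalence at every object of the form $\fil_{\ev}(Y)$. That includes $\Sigma^{2n,n}\MGL\simeq\fil_{\ev}(\Sigma^{2n}\MU)$, so your ``main obstacle'' disappears without any shift bookkeeping. Retracts and finite sums then finish the argument.

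By contrast, the paper never identifies the map as a unit. It cites Pstragowski's Lemma 7.9 to get that realization induces an isomorphism $\MGL_{\ast,\ast}[X]\to\MU_\ast\mathrm{Re}(X)$ of graded $\MU_\ast$-modules, and then concludes from evenness. Both sides are $\tau$-free with homotopy generated along the even line, so an isomorphism there propagates to all bidegrees. Your repaired route is more structural and avoids computing bigraded homotopy, at the cost of needing the connectivity restriction to have an adjunction at all.
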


\begin{proof}
	By the finite $\MGL$-projectiveness assumption on $\Sigma_+^{\infty}X$ we have that the natural map
	\[
		(\MGL_{\ast, \ast}[X]\rightarrow \MU_{\ast}\mathrm{Re}(X)
	\]
	is an equivalence of graded $\MU_{\ast}$-modules by \cite[Lemma 7.9]{pstragowskiSyntheticSpectraCellular2023}. The claim follows from eveness.
\end{proof}

\begin{lemma}\label{lem:Tor_even}
	Suppose $X$ is grouplike pointed $\E_n$-space such whose motivic suspension spectrum $\fil_{\ev}\Sigma_+^{\infty}\Omega X$ is an even finite $\MGL$-projective motivic spectrum. and $f\colon X\rightarrow \BGL_1(\MU)$ is an $\E_n$-space map whose associated $\E_n$-$\MU$-Thom spectrum we denote $T(f)$. Then the natural map
	\[
		\fil_{\ev}(\MU) 
		\otimes_{\fil_{\ev}(\MU[\Omega X])}
		\fil_{\ev}(\MU)
		\rightarrow
		\fil_{\ev}\left(
			\MU \otimes_{\MU[\Omega X]} \MU
		\right)
	\]
	is an equivalence of $\E_n$-$\fil_{\ev}(\MU)$-algebras.
\end{lemma}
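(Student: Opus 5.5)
The strategy is to compare the two sides through their bar constructions. The key input is that $\fil_{\ev}$ commutes with relative tensor products whenever the relevant homotopy is even and flat; this is what the hypothesis on $\Sigma_+^{\infty}\Omega X$ supplies.

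First I will write the relative tensor product $\MU \otimes_{\MU[\Omega X]} \MU$ as the geometric realization of the two-sided bar construction $\mathrm{Bar}_\bullet(\MU, \MU[\Omega X], \MU)$. Here the left module structure on $\MU$ comes from the Thom twist, i.e. from $f$ looped to an $\E_{n-1}$-map $\Omega X \to \mathrm{GL}_1(\MU)$, and the right one from the augmentation. The $k$-th term is $\MU \otimes_{\MU} \MU[\Omega X]^{\otimes_{\MU} k} \otimes_{\MU} \MU \simeq \MU[(\Omega X)^{k}]$. By hypothesis (transported via Lemma \ref{lem:realize}), $\MU_\ast\Omega X$ is even and finite projective over $\MU_\ast$. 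Hence each term has even, projective $\MU_\ast$-homotopy, and the bar construction for the source, $\fil_{\ev}(\MU)\otimes_{\fil_{\ev}(\MU[\Omega X])}\fil_{\ev}(\MU)$, has terms $\fil_{\ev}(\MU)\otimes_{\fil_{\ev}\MU}\fil_{\ev}(\MU[\Omega X])^{\otimes k}$. The natural comparison map is levelwise an equivalence, because $\fil_{\ev}$ is symmetric monoidal on even $\MU$-modules with projective homotopy. This is the Künneth property: the map $\fil_{\ev}(M)\otimes_{\fil_{\ev}\MU}\fil_{\ev}(N)\to\fil_{\ev}(M\otimes_{\MU}N)$ is an equivalence when $\pi_\ast M$ is even and projective, since both sides are then computed by the double-speed Whitehead tower.

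Second, I must commute $\fil_{\ev}$ past the geometric realization. Since $\fil_{\ev}$ does not preserve colimits in general, I will argue via homotopy: the bar spectral sequence computing $\pi_\ast(\MU\otimes_{\MU[\Omega X]}\MU)$ has $E_2=\Tor^{\MU_\ast\Omega X}_{\ast}(\MU_\ast,\MU_\ast)$. Evenness of everything and the fact that Thom twisting does not change the $\MU_\ast$-module structure (the Thom isomorphism) make this a polynomial-type $\Tor$ concentrated in even total degree, since $\Omega X$ is an $\E_{n-1}$-space with even cells. Therefore the spectral sequence collapses. The geometric realization of the levelwise even filtrations then computes the even filtration of the realization: each graded piece of the filtered bar construction realizes to the corresponding Adams–Novikov/synthetic graded piece, and both sides have associated graded given by the same $\Tor$ computed in $\MU_\ast\MU$-comodules.

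Compatibility with $\E_n$-$\fil_{\ev}(\MU)$-algebra structures is formal: the comparison map is the lax monoidal structure map of $\fil_{\ev}$ applied to an $\E_n$-diagram, and being an equivalence is checked on underlying objects. The main obstacle is the second step, namely the commutation of $\fil_{\ev}$ with the bar realization. For that I would first try to reduce, by the recollection that $\fil_{\ev}$ is an equivalence onto cellular motivic spectra after $p$-completion, to checking that the map induces an isomorphism on bigraded homotopy. Both bigraded homotopy groups can then be computed as the same even-concentrated $\Tor$ group, which makes all differentials and extension problems vanish for degree reasons.
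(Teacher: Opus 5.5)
Your Step 1 (levelwise comparison of the bar constructions via the Künneth property for $\MU$-modules with even projective homotopy) is fine, and you correctly identify commuting $\fil_{\ev}$ past the realization as the crux. Your resolution of that step does not work, though.

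\textbf{The evenness claim is false.} The assertion that $E_2=\Tor^{\MU_\ast\Omega X}_{\ast}(\MU_\ast,\MU_\ast)$ is concentrated in even total degree does not hold. Evenness of $\MU_\ast\Omega X$ makes the internal degree $t$ even, but not $s+t$. For nullhomotopic $f$ and $\Omega X=\BU$ the $E_2$-term is $\Lambda_{\MU_\ast}(\sigma b_1,\sigma b_2,\dots)$, with $\sigma b_i$ in total degree $2i+1$. It is also not true that the Thom twist leaves the relevant module structure unchanged. The twisted augmentation $\MU_\ast\Omega X\to\MU_\ast$ induced by $\Omega f$ is exactly what distinguishes $T(f)$ from $\MU[X]$, and no Thom isomorphism undoes it. If it did, you could never obtain $\F_p$.

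\textbf{The real issue is weights, not collapse.} In the filtered relative tensor product, a class of $\Tor_s$ in internal degree $t$ sits in bidegree $(s+t,t/2)$. In $\fil_{\ev}$ of the target, its weight is dictated by the target; for an even target such as $\F_p$ or $\Z$ it is $(s+t)/2$. So your claim that both sides have associated graded given by the same $\Tor$ fails as soon as $\Tor_s\neq 0$ for some $s>0$.

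In fact the lemma is false without an extra hypothesis. Take $f$ trivial and $\Omega X=\BU$. The class $\sigma b_1\sigma b_2$ lies in $\pi_{8,3}$ of the source, and $\pi_{8,4}$ of the source is just $\MU_8$. On the other side, $\MU[\mathrm{SU}]\simeq\bigoplus_m\Sigma^{|m|}\MU$ places $e_3e_5$ in weight $4$ of $\fil_{\ev}$. Hence the comparison map is not an isomorphism on $\pi_{8,4}$.

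\textbf{What is missing.} You need the vanishing $\Tor_s=0$ for $s>0$. Equivalently, the twisted augmentation must carry a regular sequence generating the augmentation ideal of $\MU_\ast\Omega X$ to a regular sequence in $\MU_\ast$. In the two applications, after $p$-completion, this sequence maps to $p$ (up to a unit) together with polynomial generators of $\MU_\ast$, respectively to polynomial generators of $\MU_\ast$. Under that hypothesis both sides have $\pi_{\ast,\ast}\cong(\MU_\ast/I)[\tau]$ in Chow degree zero. Your plan then goes through: levelwise Künneth, followed by checking the map on bigraded homotopy using cellularity.

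The paper's own proof runs the same $\Tor$ spectral sequence and likewise appeals only to evenness. It therefore does not supply this ingredient either, although the ingredient does hold in the cases used for Theorem A.
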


\begin{proof}
	This associated trigraded Tor sepctral sequence with signature
	\[
		E^2_{s,t,w}
		=
		\Tor_s^{\pi_{\ast, \ast}(\fil_{\ev}(\MU[\Omega X]))}
		\left(
			\pi_{\ast, \ast}(\fil_{\ev}(\MU)), \pi_{\ast, \ast}(\fil_{\ev}(\MU)))
		\right)_{t,w}
		\Rightarrow
		\pi_{s+t, w}
		\left(
			\fil_{\ev}\left(
			\MU \otimes_{\MU[\Omega X]} \MU
		\right)
		\right)
	\]
	whose convergence is atrributed to cellularity. This spectral sequence collapes on the $E^2$ page for eveness degree reasons and there are no extension problems to solve for filtration reasons.
\end{proof}

\begin{lemma}\label{lem:loop_spaces}
	The natural maps motivic spaces
	\[
		B_sK
		\rightarrow
		\Omega_{\G_m}\BGL
	\]
	and
	\[
		B_s \BGL
		\rightarrow 
		\Omega_{\G_m} \BSL
	\]
	that are equivalences of motivic spaces.
\end{lemma}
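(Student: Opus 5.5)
We would deduce both equivalences from motivic Bott periodicity, i.e.\ the $\P^1$-periodicity of algebraic $K$-theory, $K\simeq\Omega_{\P^1}K$, together with the splitting $\P^1\simeq S^1\wedge\G_m$. Since $\Omega_{\P^1}=\Omega_{S^1}\Omega_{\G_m}$, Bott periodicity reads
\[
	K\simeq\Omega_{S^1}\Omega_{\G_m}K .
\]
Applying $B_s$ to both sides and using that $\Omega_{\G_m}K$ is a grouplike $\E_\infty$-motivic space gives $B_sK\simeq$ the $S^1$-connective cover of $\Omega_{\G_m}K$. Here we take the simplicially connected component that $B_s$ sees, computed on Nisnevich-local homotopy sheaves.

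\textbf{First equivalence.} Since $K=\Z\times\BGL$ and $\Omega_{\G_m}\Z\simeq\ast$ (the constant sheaf $\Z$ is $\G_m$-rigid: maps $\G_m\to\Z$ from the connected scheme $\G_m$ are constant), we get $\Omega_{\G_m}K\simeq\Omega_{\G_m}\BGL$. It then remains to show that $\Omega_{\G_m}\BGL$ is simplicially connected, i.e.\ $\pi_0^{\mathbb{A}^1}$ vanishes. Its $\pi_0$-sheaf is the contraction $(\pi_0 K)_{-1}$ restricted to the rank-$0$ part. On fields this is $K_{-1}=0$, and for the component of the identity it is trivial. By Morel's theory, strictly $\mathbb{A}^1$-invariant sheaves are detected on fields, which gives the vanishing. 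So $B_sK\to\Omega_{\G_m}\BGL$ identifies with the inclusion of the connected cover of a connected space, hence is an equivalence. We also check that the natural map is the Bott-adjoint map, i.e.\ that it is induced by the Bott element $\beta\colon\P^1\to\BGL$ via $S^1\wedge\G_m\to\BGL$, adjoint to $\Sigma_{S^1}(\text{--})$ applied to $K\simeq\Omega_{\G_m}\BGL$ on $\pi_0$ fixes.

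\textbf{Second equivalence.} We use the fiber sequence $\BSL\to\BGL\xrightarrow{\det}B\G_m$ of motivic spaces. Applying $\Omega_{\G_m}$ gives
\[
	\Omega_{\G_m}\BSL\to\Omega_{\G_m}\BGL\to\Omega_{\G_m}B\G_m .
\]
Since $B\G_m\simeq K(\G_m,1)$ and $(\G_m)_{-1}=\Z$, we have $\Omega_{\G_m}B\G_m\simeq B_s\Z$. By the first equivalence $\Omega_{\G_m}\BGL\simeq B_sK=B_s\Z\times B_s\BGL$, and the determinant map corresponds to projection onto the $B_s\Z$ factor, since rank is Bott-dual to determinant. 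Taking fibers then gives $\Omega_{\G_m}\BSL\simeq B_s\BGL$.

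The main obstacle is this identification of $\Omega_{\G_m}\det$ with the rank projection, together with compatibility of the natural maps. We would first verify it on $\pi_1$-sheaves using the contraction $(K_1)_{-1}=K_0$, under which $\det$ becomes rank. Over $\C$ one could alternatively confirm the equivalences using cellularity and Lemma \ref{lem:realize}-style detection after Betti realization ($U\simeq BBU$ style), but the sheaf-theoretic argument is cleaner.
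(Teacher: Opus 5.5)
Your proposal is correct and follows the paper's route. That route is Bott periodicity $K\simeq\Omega_{S^1}\Omega_{\G_m}K$, then applying $B_s$, then the rank sequence $\BGL\to K\to\Z$ (with $\Omega_{\G_m}\Z\simeq\ast$) and the determinant sequence $\BSL\to\BGL\to B_s\G_m$ (with $\Omega_{\G_m}B_s\G_m\simeq B_s\Z$). You also fill in two steps the paper leaves implicit: that $\Omega_{\G_m}K$ is connected (because $K_{-1}=0$ on regular schemes), which is exactly what makes $B_s\Omega_{S^1}\Omega_{\G_m}K\simeq\Omega_{\G_m}K$, and that $\Omega_{\G_m}\det$ is the projection to $B_s\Z$, which it indeed suffices to check on $\pi_1$ since the target $B_s\Z$ is $1$-truncated and the source is connected. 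Your aside on the Bott-adjoint map is garbled (it should read $B_sK\simeq\Omega_{\G_m}\BGL$, not $K\simeq\Omega_{\G_m}\BGL$), but the argument does not need it.
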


\begin{proof}
	By motivic Bott periodicity \cite[Proposition 4.1]{gepnerMotivicSpectraRepresenting2009}, the natural map
	\[
		K
		\rightarrow
		\Omega_{\P^1}K
	\]
	is an equivalence. By observing $\Omega_{\P^1}K=\Omega_{s}\Omega_{\G_m}K$ and applying $B_s$, one obtains the equivalence
	\[
		B_sK\rightarrow B_s\Omega_{s}\Omega_{\G_m} K
	\]
	from which one concludes the equivalence
	\[
		B_s K \rightarrow \Omega_{\G_m} K.
	\]
	The desired first statement follows from considering the fiber sequnce 
	\[
		\BGL
		\rightarrow 
		K 
		\rightarrow 
		\Z
	\]
	associated to the rank map $K\rightarrow \Z$. The second desired statement follows from considering the fiber sequence
	\[
		\BSL
		\rightarrow
		\BGL
		\rightarrow
		B_s\G_m
	\]
	associated to the determinant map $\BGL\rightarrow B_s\G_m$.
\end{proof}

\begin{theorem}[Theorem A]\label{thm:A}
	For every prime $p$, there exists an $\E_3$-motivic space map
	\[
		\Omega_{\G_m}\BGL
		\rightarrow 
		\BGL_1(\MGL)
	\]
	whose associated $\E_3$-$\MGL$-motivic Thom spectrum is isomorphic to $\mathrm{M}\F_p$. Precomposition by $\Omega_{\G_m}\BGL \rightarrow \BGL_1(\MGL)$ yields a $\E_3$-motivic space map
	\[
		\Omega_{\G_m}\BSL
		\rightarrow
		\BGL_1(\MGL)
	\]
	whose associated $\E_3$-$\MGL$-motivic Thom spectrum is isomorphic to $\mathrm{M}\Z$.
\end{theorem}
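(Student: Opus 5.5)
The plan is to transport the topological $\MU$-relative Hopkins--Mahowald statements of \cite{cranfordGeneralizedThomSpectra2026} through the even filtration. We use the $p$-complete equivalence between even $\MU$-synthetic spectra and cellular motivic spectra from the Recollection, and we use the Lemma identifying Thom objects as bar constructions. Everything is $p$-completed throughout.

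\emph{Step 1.} Start from the $\E_3$-map $f\colon U\to\BGL_1(\MU)$ with Thom spectrum $\F_p$ (and its restriction to $SU$, with Thom spectrum $\Z$). Since $U\simeq \Omega BU$ is connected and grouplike, the Lemma gives
\[
\F_p\simeq \MU\otimes_{\MU[\Omega U]}\MU
\]
as $\E_3$-$\MU$-algebras, and similarly $\Z\simeq\MU\otimes_{\MU[\Omega SU]}\MU$. Note that $\Omega U\simeq \Z\times BU$ by Bott periodicity and $\Omega SU\simeq BU$.

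\emph{Step 2.} Apply $\fil_{\ev}$. By Lemma \ref{lem:Tor_even},
\[
\fil_{\ev}(\F_p)\simeq \fil_{\ev}(\MU)\otimes_{\fil_{\ev}(\MU[\Omega U])}\fil_{\ev}(\MU).
\]
The left side corresponds under the Recollection to $\mathrm{M}\F_p$; this is the known identification of the motivic Eilenberg--Mac Lane spectrum with the even filtration of $\F_p$ over $\C$. By Lemma \ref{lem:realize} applied to $K=\Z\times\BGL$ (respectively $\BGL$), whose Betti realizations are $\Z\times BU$ and $BU$, we get $\fil_{\ev}(\MU[\Omega U])\simeq \MGL[K]$ and $\fil_{\ev}(\MU[\Omega SU])\simeq\MGL[\BGL]$. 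Hence
\[
\mathrm{M}\F_p\simeq \MGL\otimes_{\MGL[K]}\MGL, \qquad \mathrm{M}\Z\simeq \MGL\otimes_{\MGL[\BGL]}\MGL
\]
as $\E_3$-$\MGL$-algebras.

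\emph{Step 3.} Produce the motivic Thom map. The augmentation $\MGL[K]\to\MGL$, which is the $\fil_{\ev}$ of the $\E_2$-loop of $f$, is an $\E_3$-$\MGL$-algebra map. It is adjoint to an $\E_3$-map $K\to\GL_1(\MGL)$, and delooping once gives an $\E_3$-map $B_sK\to\BGL_1(\MGL)$. Precisely, we take $\Omega$ of $f$, which is an $\E_4$-map $\Omega U\to \GL_1\MU$. Applying $\fil_{\ev}$ to the resulting group-algebra augmentation and using Lemma \ref{lem:realize} yields an $\E_4$-$\MGL$-algebra map $\MGL[K]\to\MGL$, hence a map $K\to\GL_1\MGL$ that is $\E_4$-monoidal, and $B_s$ of it is $\E_3$. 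By Lemma \ref{lem:loop_spaces}, $B_sK\simeq\Omega_{\G_m}\BGL$ and $B_s\BGL\simeq\Omega_{\G_m}\BSL$, and these equivalences are compatible with the map $\Omega_{\G_m}\BSL\to\Omega_{\G_m}\BGL$ induced by $\BGL\to K$. Now apply the Lemma on Thom objects in $\mathcal{C}=\Mod_{\MGL}$. Since $B_sK$ is connected and grouplike with $\Omega B_sK\simeq K$, the Thom spectrum is $\MGL\otimes_{\MGL[K]}\MGL\simeq\mathrm{M}\F_p$, and likewise for $\BSL$.

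\emph{Main obstacle.} The hard step is Step 3's promotion of structure. We must check that the augmentation obtained through $\fil_{\ev}$ is $\E_4$ (not just $\E_3$) so that it deloops to an $\E_3$-map, and that the resulting motivic Thom map is the one whose bar construction is the $\fil_{\ev}$ of the topological one; that is, that $\fil_{\ev}$ commutes with forming group algebras and their augmentations. I would first try to handle this by working entirely inside the equivalent category of $p$-complete even $\MU$-synthetic spectra, where the topological $\E_4$-map on $\Omega U$ transports directly. A secondary issue is verifying the hypotheses of Lemma \ref{lem:realize}: $K$ and $\BGL$ are not finite, so one needs a colimit or filtered version using Grassmannian cells, which are even and $\MGL$-free.
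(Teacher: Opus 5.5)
Your proposal is correct and follows the same route as the paper's proof: write $\F_p$ and $\Z$ as the bar constructions $\MU\otimes_{\MU[\Omega U]}\MU$ and $\MU\otimes_{\MU[\Omega SU]}\MU$, then push these through $\fil_{\ev}$ using Lemmas \ref{lem:realize} and \ref{lem:Tor_even} to get $\mathrm{M}\F_p\simeq\MGL\otimes_{\MGL[K]}\MGL$ and $\mathrm{M}\Z\simeq\MGL\otimes_{\MGL[\BGL]}\MGL$, reinterpret these as Thom spectra over $B_sK$ and $B_s\BGL$, and conclude with Lemma \ref{lem:loop_spaces}. Several points the paper's proof leaves implicit are supplied by your version: using the $\E_4$-map $\Omega f$ to obtain an $\E_4$-augmentation, so that delooping actually gives an $\E_3$-map; the identification $\fil_{\ev}(\F_p)\simeq\mathrm{M}\F_p$; and the fact that $K$ and $\BGL$ are not finite, as Lemma \ref{lem:realize} requires.
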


\begin{proof}
	Recall there there exists an $\E_3$-space map $U\rightarrow \BGL_1(\MU)$ whose $\E_3$-$\MU$-Thom spectrum is isomorphic to $\F_2$ and furthermore precomposing with $SU \hookrightarrow U$ yields an $\E_3$-$\MU$-space map $SU\rightarrow \BGL_1(\MU)$ whose $\E_3$-$\MU$-Thom spectrum is isomorphic to $\Z$ \cite[Theorem A]{cranfordGeneralizedThomSpectra2026}. These two statements are equivalent to the statemetns that there exists equivalences
	\[
		\MU \otimes_{\MU[\Omega U]} \MU \rightarrow \F_p
	\]
	and
	\[
		\MU \otimes_{\MU[\Omega SU]} \MU \rightarrow \Z
	\]
	of $\E_3$-$\MU$-algebras. Note that $\Omega U$ lifts to the motivic space $K$ and $\Omega SU=BU$ lifts to the motivic space $\BGL$. Both of these motivic spaces satisfy the hypothesis of Lemmas \ref{lem:realize} and \ref{lem:Tor_even} therefore we get equivalences
	\[
		\MGL \otimes_{\MGL[K]} \MGL \rightarrow \mathrm{M}\F_p
	\]
	and
	\[
		\MGL
		\otimes_{\MGL[\BGL]} \MGL \rightarrow \Z
	\]
	of $\E_3$-$\MGL$-motivic Thom spectra. These statemnts are equivalent to the claims of existance of an $\E_3$-motivic space map
	\[
		B_s K
		\rightarrow 
		\BGL_1(\MGL)
	\]
	whose associated $\E_3$-$\MGL$-motivic Thom spectrum is isomorphic to $\mathrm{M}\F_p$ and the existance of a $\E_3$-motivic space map
	\[
		B_s\BGL
		\rightarrow
		\BGL_1(\MGL)
	\]
	whose associated $\E_3$-$\MGL$-motivic Thom spectrum is isomorphic to $\mathrm{M}\Z$. The theorem follows by using Lemma \ref{lem:loop_spaces} to rewrite $B_sK$ and $B_s\BGL$.
\end{proof}

\begin{corollary}[Corollary \ref{intro:HHM}]\label{cor:HHM}
	Let $a_i$ denote the generators of the Lazard ring $(\MGL_{(\ast, \ast)})_0$ Then there is an equivalence
	\[
		\MGL/(a_1, a_2, \dots) \simeq \mathrm{M}\Z
	\]
	of $\E_3$-motivic spectra.
\end{corollary}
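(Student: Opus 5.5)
The plan is to read off Corollary \ref{cor:HHM} from the second half of Theorem \ref{thm:A}. There we have the $\E_3$-$\MGL$-Thom spectrum equivalence
\[
	\MGL \otimes_{\MGL[\BGL]} \MGL \simeq \mathrm{M}\Z
\]
(everything $p$-complete, as in the section conventions). The idea is to compute the left side as an iterated quotient of $\MGL$ by the Lazard generators.

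First I would use the cellular Hopf algebra structure of $\MGL[\BGL]$. Because $\Sigma^\infty_+\BGL$ is pure Tate and $\MGL$-projective, the Hopkins--Hoyois--Morel theorem together with the standard computation gives
\[
	\MGL_{\ast,\ast}[\BGL] \cong \MGL_{\ast,\ast}[b_1,b_2,\dots], \qquad \deg b_i = (2i,i).
\]
As an $\E_2$- (indeed $\E_\infty$-) $\MGL$-algebra, $\MGL[\BGL]$ is then equivalent, after even-filtration realization via Lemma \ref{lem:realize}, to the free $\E_1$-$\MGL$-algebra with commuting polynomial generators $b_i$. The augmentation $\MGL[\BGL]\to\MGL$ induced by the Thom class sends each $b_i$ to an element of $\MGL_{2i,i}$. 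The key input, which I would import through Betti realization and Lemma \ref{lem:Tor_even}, is the topological fact underlying \cite[Theorem A]{cranfordGeneralizedThomSpectra2026}: the images of the $b_i$ form a regular sequence of polynomial generators of the Lazard ring, so they may be taken to be the $a_i$.

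Given this, the bar construction factors as an iterated tensor product. Each $b_i$ generates a free $\E_1$-$\MGL$-algebra $\MGL[\Omega S^{2i+1,i}]$, and $\MGL\otimes_{\MGL[b_i]}\MGL$ with augmentation $b_i\mapsto a_i$ is exactly $\MGL/a_i$. Hence
\[
	\MGL\otimes_{\MGL[\BGL]}\MGL \simeq \MGL/a_1 \otimes_{\MGL}\MGL/a_2\otimes_{\MGL}\cdots = \MGL/(a_1,a_2,\dots).
\]
The $\E_3$ structure on the left transports the claimed $\E_3$ structure to the quotient. The $\mathrm{M}\F_p$ version follows in the same way from the first half of Theorem \ref{thm:A}, using $B_sK$ and the extra generator $p$ coming from $\pi_0$ of $\Omega_{\G_m}\BGL$.

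The main obstacle is the identification of the augmentation images with Lazard generators. Purely from the homotopy groups this is forced: the Tor spectral sequence of Lemma \ref{lem:Tor_even} collapses. Since its abutment $\mathrm{M}\Z_{\ast,\ast}$ is concentrated in weight-zero-ish degrees with $\pi_{2i,i}=0$ for $i>0$, the images of the $b_i$ must span the indecomposables of the Lazard ring $(\MGL_{\ast,\ast})_0$ in each degree. This is the argument I would try first. If it works, it avoids the topological input entirely and only uses the Hopkins--Hoyois--Morel description $\MGL_{2\ast,\ast}\cong L$.
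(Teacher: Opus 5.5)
Your proposal is correct and follows the paper's route. The paper's proof only notes that the equivalence comes from the augmentation $\MGL[\BGL]\to\MGL$ in the bar-construction form $\MGL\otimes_{\MGL[\BGL]}\MGL\simeq\mathrm{M}\Z$ of Theorem~A. You fill in the omitted steps: splitting the bar construction into factors $\MGL\otimes_{\MGL[b_i]}\MGL\simeq\MGL/a_i$, and forcing the images of the $b_i$ to be Lazard generators using $\pi_{2i,i}\mathrm{M}\Z=0$ and the Tor spectral sequence.

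One phrasing should be fixed. The splitting $\MGL[\BGL]\simeq\bigotimes_i\MGL[\Omega S^{2i+1,i}]$ holds only as $\E_1$-$\MGL$-algebras, assembled using the commutative multiplication of $\MGL[\BGL]$. It does not hold as $\E_2$-algebras, nor as a single free $\E_1$-algebra. Your module-level bar computation only needs the $\E_1$ splitting, since both augmentations are $\E_2$ and therefore factor compatibly.
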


\begin{proof}
	The map comes from the surjection $\MGL[\BGL]\rightarrow \MGL$ that appeared from the equivalence
	\[
		\MGL
		\otimes_{\MGL[\BGL]} \MGL \rightarrow \Z
	\]
	of $\E_3$-$\MGL$-motivic Thom spectra that appeared in the proof of Theorem \label{thm:A}.
\end{proof}

\section{Normed Motivic Hochschild Homology}\label{sec:MHH}
In this section we prove Theorem B. We begin be reviewing motivic Hochschild homology.

\begin{definition}[Section 2.1 and Definition 4.1 of \cite{bachmannMotivicHochschildHomology2026}]
	Let $X \in \Spc(X)$ and $A\rightarrow E$ a normed motivic spectrum map. Then $X \odot^{A} E$ is defined to be the tensor $X \otimes E$ in $\NAlg(\Mod_A(\SH(\C)))=\NAlg(\SH(\C))_{/A}$.
\end{definition}

\begin{example}
	Consider the normed motivic spectrum maps $\MGL\rightarrow \Z$ of \cite[Example 16.34]{bachmannNormsMotivicHomotopy2021}, $\Z\rightarrow \F_2$ of \cite[Remark 14.6]{bachmannNormsMotivicHomotopy2021}, and $\MGL\rightarrow \F_2$ formed by composition. We can thus consider $\G_m \otimes^{\MGL} \F_2$ and $\G_m \otimes^{\MGL} \Z$.
\end{example}

To prove Theorem B, we will prove the existance of an $\E_2$-$\mathrm{M}\F_2$-algebra map $f_{\F_2}\colon \mathrm{M}\F_2[\BGL]\rightarrow \G_m \odot^{\MGL}\mathrm{M}\F_2$ and of an $\E_2$-$\mathrm{Z}\Z$-algebra map. $f_{\Z}\colon\mathrm{M}\Z[\BSL]\rightarrow \G_m \odot^{\MGL}\mathrm{M}\Z$ before we prove that $f_{\Z_2}$ and $f_{\Z}$ are equivalences. In \cite[Section 4.2]{bachmannMotivicHochschildHomology2026}, the authors construct a map from the free $\E_1$-$\mathrm{M}\F_2$-algebra $\mathrm{M}\F_2[\mu_{2,1}]$ on a generator $\mu_{2,1}$ in bidegree $(2,1)$ in the following way: to the class $\tau_0 \in \pi_{1,0}(\mathrm{M}\F_2 \otimes \mathrm{M}\F_2)$, they produce a class $\mu_{2,1} \in \pi_{2,1}(\G_m \odot \mathrm{M}\F_2)$ via the map $\Sigma^{1,1} \mathrm{M}\F_2 \otimes \mathrm{M}\F_2\rightarrow \G_m \odot \mathrm{M}\F_2$. The class $\mu_{2,1} \in \pi_{2,1}(\G_m \odot \mathrm{M}\F_2)$ determines an $\E_1$-$\mathrm{M}\F_2$-algebra map $\mathrm{M}\F_2[\mu_{2,1}]\rightarrow \G_m \odot \mathrm{M}\F_2$ by the universal property of $\mathrm{M}\F_2[\mu_{2,1}]$.

We will form our $\E_2$-algebra maps $f_{\F_2}$ and $f_{\Z}$ by a process that is significantly different from the way that the $\E_1$-$\mathrm{M}\F_2$-algebra map $\mathrm{M}\F_2[\mu_{2,1}]\rightarrow \G_m \odot \mathrm{M}\F_2$ above was formed. For context, we recall that \cite[Section 4]{bachmannMotivicHochschildHomology2026} first form their map then determine upon Betti realization = inversion of $\tau$ that they recover B\:oksdedt periodicity. To produce $f_{\F_2}$ and $f_{\Z}$, we will ``do things the other way around''. We will start with desired underlying topological equivalences $\THH(\F_2/\MU)\simeq \F_2[BU]$ of $\E_2$-$\F_p$-algebras and $\THH(\Z/\MU)\simeq \Z[BSU]$ of $\E_2$-$\Z$-algebras and deduce that there exist unique choices of $f_{\F_2}$ and $f_{\Z}$ that motivically enhance these topological equivalences. The uniqueness of such choices will come from celluluarity and Tateness forcing fully faithfullness of Betti realization. The precise content of this idea is stated in Proposition \ref{prop:mod_tau_tate}.

Before we state Proposition \ref{prop:mod_tau_tate}, we make our setting modulo $\tau$. 

\begin{notation}
We define $\overline{\mathrm{M}\F_2}\coloneqq \mathrm{M}\F_2/\tau$ and $\overline{\mathrm{M}\Z_2^{\wedge}}\coloneqq \mathrm{M}\Z/\tau$.
\end{notation}

\begin{theorem}[Theorem 2.9 of \cite{bachmannMotivicHochschildHomology2026}, forthcoming work of Bachmann-Hopkins]\label{thm:mod_tau_norm}
The motivic spectra $\overline{\mathrm{M}\F_2}$ and $\overline{\mathrm{M}\Z_2^{\wedge}}$ admit normed refinements.
\end{theorem}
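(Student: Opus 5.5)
The plan is to realize $\overline{\mathrm{M}\F_2}$ and $\overline{\mathrm{M}\Z_2^{\wedge}}$ as truncations of the normed spectra $\mathrm{M}\F_2$ and $(\mathrm{M}\Z)_2^{\wedge}$ for a $t$-structure compatible with norms. The $t$-structure I would use is the Chow $t$-structure on $\SH(\C)$ of Bachmann–Kong–Wang–Xu. Its connective part is generated under colimits and extensions by Thom spectra of vector bundles on smooth proper schemes. A bigraded class in $\pi_{s,w}$ has Chow degree $s-2w$. In particular $\tau\in\pi_{0,-1}$ has Chow degree $2$, while $\pi_{\ast,\ast}$ of $\mathrm{M}\F_2/\tau$ (respectively $(\mathrm{M}\Z)_2^{\wedge}/\tau$) is concentrated in Chow degree $0$.

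\emph{Step 1.} Check that $\mathrm{M}\F_2$ and $(\mathrm{M}\Z)_2^{\wedge}$ are Chow-connective. Then compare the unit maps to their Chow heart truncations $\tau^{c}_{\le 0}$ with the maps to $\mathrm{M}\F_2/\tau$ and $\mathrm{M}\Z/\tau$. Because the homotopy of $\mathrm{M}\F_2$ is $\F_2[\tau]$ (after $2$-completion, $\Z_2^{\wedge}[\tau]$ for $\mathrm{M}\Z$) with $\tau$ of positive Chow degree, cellularity identifies
\[
\tau^{c}_{\le 0}\mathrm{M}\F_2\simeq \overline{\mathrm{M}\F_2},\qquad \tau^{c}_{\le 0}(\mathrm{M}\Z)_2^{\wedge}\simeq \overline{\mathrm{M}\Z_2^{\wedge}}.
\]
This identification uses Chow-degree counting on the cofiber sequence for multiplication by $\tau$.

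\emph{Step 2.} Show that the Chow $t$-structure is compatible with norms over every smooth $\C$-scheme, functorially in the base. Concretely, I need three things for finite étale $f\colon T\to S$:
\begin{itemize}
\item $f^{\ast}$ preserves Chow-connectivity and Chow-coconnectivity;
\item the norm $f_{\otimes}$ preserves Chow-connective objects;
\item the tensor product is right $t$-exact.
\end{itemize}
The first holds because $f$ is smooth and proper. For the second, the generators are Thom spectra $\mathrm{Th}_X(V)$ with $X$ smooth proper, and $f_{\otimes}$ sends them to the Thom spectrum of the Weil restriction bundle on the smooth proper Weil restriction $R_{T/S}X$. Sifted colimits are preserved by norms, and extensions can be handled by the standard filtration argument of Bachmann–Hoyois for $\SH^{\mathrm{veff}}$. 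Given this, the same argument Bachmann–Hoyois use to show that very effective slices and truncations of normed spectra remain normed applies. The truncation $\tau^{c}_{\le 0}$ is a localization whose kernel is a normed ideal, so it refines to a functor $\NAlg(\SH(\C))^{c\ge 0}\to\NAlg(\SH(\C))$. Applying it to the normed structures on $\mathrm{M}\F_2$ and $(\mathrm{M}\Z)_2^{\wedge}$ then gives the desired normed refinements, by Step 1.

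\emph{Main obstacle.} I expect Step 2 to be the hard part. The Chow $t$-structure is naturally defined over a field, while a normed structure requires the whole parametrized family $S\mapsto\SH(S)$ over smooth $\C$-schemes together with the norm functors. One must therefore define Chow-connectivity over a general smooth base, say via Thom spectra of bundles on schemes smooth proper over $S$. One must also check that Chow truncation commutes with pullback along smooth maps, so that the truncated objects assemble into a normed spectrum rather than just an $\E_\infty$-ring over $\C$. I would first try to reduce to the cellular, $2$-complete setting, where by the Recollection the relevant categories are modeled by even $\MU$-synthetic spectra and the Chow heart becomes $\MU_\ast\MU$-comodules. That reduction would make the connectivity estimates for norms computable via the formula for $\MGL$-homology of Weil restrictions.
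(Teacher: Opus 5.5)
The paper gives no proof of this statement. It is quoted as Theorem 2.9 of Bachmann--Burklund--Dundas--{\O}stv{\ae}r and attributed to forthcoming work of Bachmann--Hopkins, so your argument has to stand on its own. Both of its steps break.

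\emph{Step 1 is false in $\SH(\C)$.} Chow-coconnectivity means right orthogonality to $\Sigma^{n}\mathrm{Th}_X(\xi)$, $n\ge 1$, for \emph{all} smooth proper $X$, not only to spheres. Bigraded homotopy groups therefore do not detect it, even for cellular objects.

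Take a complex elliptic curve $Y$. In your convention $\Sigma^{-1,-1}$ raises Chow degree by $1$, so $\Sigma^{-1,-1}\Sigma^\infty_+Y\in\SH(\C)^{c\ge 1}$. The cofiber sequence for $\tau$ gives an injection
\[
H^{1,1}(Y;\Z/2)\big/\tau H^{1,0}(Y;\Z/2)\hookrightarrow[\Sigma^{-1,-1}\Sigma^\infty_+Y,\overline{\mathrm{M}\F_2}],
\]
where $H^{1,0}(Y;\Z/2)=0$ and $H^{1,1}(Y;\Z/2)\cong H^1_{\mathrm{et}}(Y;\mu_2)\cong Y[2]\cong(\Z/2)^2$. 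Hence $\overline{\mathrm{M}\F_2}$ is not Chow-coconnective.

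Since the fiber $\Sigma^{0,-1}\mathrm{M}\F_2$ of $\mathrm{M}\F_2\to\overline{\mathrm{M}\F_2}$ lies in $c\ge 2$, you get $\tau^c_{\le0}\mathrm{M}\F_2\simeq\tau^c_{\le0}\overline{\mathrm{M}\F_2}$. This is a proper further truncation of $\overline{\mathrm{M}\F_2}$, not $\overline{\mathrm{M}\F_2}$ itself. The same happens $2$-adically for $\mathrm{M}\Z$, where $H^{1,1}(Y;\Z_2^{\wedge})\cong T_2Y$. Your degree count is valid only for the t-structure on $\SH(\C)^{\cell}$ generated by spheres of non-negative Chow degree.

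\emph{Step 2 fails for the reason you flagged, and it is not a technicality.} Over $\mathrm{Spec}\,\C$ every finite \'etale cover splits, so all the norm data of a normed spectrum lives over non-trivial smooth bases. The Bachmann--Hoyois truncation argument for $\SH^{\mathrm{veff}}$ works because smooth pullbacks are t-exact there ($g_\sharp$ preserves the generators). For a Chow-type truncation this cannot hold.

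Let $g\colon\G_m\to\mathrm{Spec}\,\C$ and $R=\tau^c_{\le0}\mathbf{1}$, which is nonzero because the Chow heart is nonzero. Since $\Sigma^\infty_+\G_m\simeq\mathbf{1}\oplus\Sigma^{1,1}\mathbf{1}$,
\[
[\Sigma^{-1,-1}\mathbf{1}_{\G_m},g^*R]\cong[\Sigma^{-1,-1}\Sigma^\infty_+\G_m,R]\cong[\Sigma^{-1,-1}\mathbf{1},R]\oplus[\mathbf{1},R],
\]
and the second summand contains the nonzero truncation map $\mathbf{1}\to R$. On the other hand, $\Sigma^{-1,-1}\mathbf{1}_{\G_m}=g^*\Sigma^{-1,-1}\mathbf{1}$ is Chow $1$-connective as soon as $g^*$ is right t-exact. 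This holds for the relative definition via smooth proper $\G_m$-schemes.

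So $g^*\tau^c_{\le0}\mathbf{1}$ is not coconnective over $\G_m$, i.e.\ $g^*\tau^c_{\le0}\not\simeq\tau^c_{\le0}g^*$. Only right t-exactness of $g^*$ was used, so no choice of relative Chow t-structures repairs this. The levelwise truncations of a normed spectrum therefore do not assemble into a normed spectrum over $\C$.

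Your fallback to even $\MU$-synthetic spectra cannot help either. That model describes only $\SH(\C)^{\cell}$ and retains the $\E_\infty$-structure over the point, none of the norms. The proposal therefore does not establish the statement; some input beyond truncating the normed structure on $\mathrm{M}\F_2$ is required.
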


The equivalence in Theorem A of $\F_p$ being Thom spectrum of the $\E_3$-map $U\rightarrow \BGL_1(\MU)$ and Proposition \ref{prop:thh_of_thom_bialg} give equivalences
\begin{lemma}
	There exist equivalence
	\[
		\left(\G_m \odot^{\MGL} \mathrm{M}\F_2\right)
		\otimes_{\MGL} \overline
		{\mathrm{M}\F_2}
		\simeq
		\G_m
		\odot^{\overline{\mathrm{M}\F_2}} \overline{\mathrm{M}\F_2} \otimes_{\MGL} \mathrm{M}\F_2
	\]
	of normed motivic spectra. Furthermore there exists an equivalence
	\[
		\left(\G_m \odot^{\MGL} \mathrm{M}\F_2\right)
		\otimes_{\mathrm{M}\F-2} \overline{\mathrm{M}\F_2}
		\simeq
		\G_m
	\odot^{\overline{\mathrm{M}\F_2}} \overline{\mathrm{M}\F_2} \otimes_{\MGL} \mathrm{M}\F_2 \otimes_{\overline{\mathrm{M}\F_2} \otimes_{\MGL} \mathrm{M}\F_2}\overline{\mathrm{M}\F_2}
	\]
	of normed motivic spectra.
\end{lemma}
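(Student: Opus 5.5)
The first equivalence is a base-change statement for the functor $X \odot^{A} {-}$, and I will prove it formally from the universal property of tensors in $\NAlg(\Mod_A(\SH(\C))) = \NAlg(\SH(\C))_{/A}$. The second equivalence will then follow by one further base change. The basic input is the following. For a map of normed spectra $A \to B$, base change
\[
B \otimes_A {-} \colon \NAlg(\SH(\C))_{A/} \to \NAlg(\SH(\C))_{B/}
\]
is a left adjoint. It therefore preserves colimits and commutes with tensoring by motivic spaces, since $X \otimes E = \operatorname{colim}_X E$. Hence for $E$ under $A$ we get
\[
(X \odot^{A} E) \otimes_A B \simeq X \odot^{B} (E \otimes_A B).
\]

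\textbf{Step 1.} Apply this with $X = \G_m$, $A = \MGL$, $E = \mathrm{M}\F_2$ and $B = \overline{\mathrm{M}\F_2}$. This is legitimate because Theorem \ref{thm:mod_tau_norm} makes $\overline{\mathrm{M}\F_2}$ normed. We also need a normed map $\MGL \to \overline{\mathrm{M}\F_2}$, obtained by composing the normed unit map $\MGL \to \mathrm{M}\F_2$ of the Example with a normed map $\mathrm{M}\F_2 \to \overline{\mathrm{M}\F_2}$. Justifying that last map is the point I expect to need care: I would take it from the Bachmann--Hopkins input as stated in \cite{bachmannMotivicHochschildHomology2026}, where the normed structure on $\overline{\mathrm{M}\F_2}$ is built as an $\mathrm{M}\F_2$-algebra. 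Step 1 then gives
\[
(\G_m \odot^{\MGL} \mathrm{M}\F_2) \otimes_{\MGL} \overline{\mathrm{M}\F_2} \simeq \G_m \odot^{\overline{\mathrm{M}\F_2}} \bigl(\overline{\mathrm{M}\F_2} \otimes_{\MGL} \mathrm{M}\F_2\bigr),
\]
which is the first claim, with the statement's bracketing read this way. The relative tensor over $\MGL$ of two normed $\MGL$-algebras is their pushout in normed spectra, so it is normed. Note that here $\G_m \odot^{\MGL} \mathrm{M}\F_2$ is regarded as an $\MGL$-algebra through its $\MGL$-algebra structure, not through $\mathrm{M}\F_2$.

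\textbf{Step 2.} For the second claim, base change along $\mathrm{M}\F_2$. I would use the unit $\mathrm{M}\F_2 \to \G_m \odot^{\MGL} \mathrm{M}\F_2$, namely the inclusion of the basepoint fiber. I would then rewrite $({-}) \otimes_{\mathrm{M}\F_2} \overline{\mathrm{M}\F_2}$ as an iterated base change: first along $\MGL \to \overline{\mathrm{M}\F_2}$, which is Step 1, and then along the multiplication map
\[
\overline{\mathrm{M}\F_2} \otimes_{\MGL} \mathrm{M}\F_2 \to \overline{\mathrm{M}\F_2}.
\]
This rewriting is the pushout identity
\[
C \otimes_{\mathrm{M}\F_2} \overline{\mathrm{M}\F_2} \simeq (C \otimes_{\MGL} \overline{\mathrm{M}\F_2}) \otimes_{\mathrm{M}\F_2 \otimes_{\MGL} \overline{\mathrm{M}\F_2}} \overline{\mathrm{M}\F_2}
\]
in normed spectra under $\MGL$, applied to $C = \G_m \odot^{\MGL} \mathrm{M}\F_2$.

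The main obstacle is that the final factor $\otimes_{\overline{\mathrm{M}\F_2} \otimes_{\MGL} \mathrm{M}\F_2} \overline{\mathrm{M}\F_2}$ is a relative tensor over the base ring of the $\odot$, not a change of base for the $\odot$. So the base-change formula for $\odot$ does not apply directly to it. I would handle it by the pasting law for pushouts: $(\G_m \odot^{R} R) \otimes_R \overline{\mathrm{M}\F_2}$ computed in $\NAlg_{R/}$, where $R = \overline{\mathrm{M}\F_2} \otimes_{\MGL} \mathrm{M}\F_2$. I would verify that all the pushouts involved are taken in normed spectra and not merely in $\E_\infty$-rings, using that the forgetful functor from normed algebras to modules preserves sifted colimits. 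Pushouts are not sifted, so I would use instead that relative tensor products of normed algebras are computed as bar constructions, which are sifted.
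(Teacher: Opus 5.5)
Your strategy is what the paper's one-line proof (``standard properties of $\G_m\odot-$ and of normed spectra'') has in mind: base change commutes with $\odot$, followed by pushout pasting. Your pasting identity in Step 2 is correct.

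\textbf{The key step of Step 1 is justified incorrectly.} $\G_m$ is a motivic space, not a space, so $\G_m\odot^A E$ is not a colimit $\operatorname{colim}_{\G_m}E$ of a constant diagram in $\NAlg(\SH(\C))$. It is the tensor for the $\Spc(\C)$-enrichment of normed spectra. That is, it corepresents $F\mapsto \mathrm{Map}_{\Spc(\C)}(\G_m,\underline{\mathrm{Map}}_A(E,F))$, where $\underline{\mathrm{Map}}_A(E,F)(U)=\mathrm{Map}_{\NAlg(\SH(U))_{q^*A/}}(q^*E,q^*F)$ for $q\colon U\to\operatorname{Spec}\C$ smooth. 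So knowing that $B\otimes_A-$ is a left adjoint preserving ordinary colimits is not enough. You need the adjunction to be compatible with smooth pullback: $q^*(B\otimes_A E)\simeq q^*B\otimes_{q^*A}q^*E$, which holds because $q^*$ is symmetric monoidal and preserves pushouts of normed algebras. This gives $\underline{\mathrm{Map}}_B(B\otimes_A E,F)\simeq\underline{\mathrm{Map}}_A(E,F)$ as motivic spaces, naturally in $F\in\NAlg_{B/}$. Yoneda then yields $(X\odot^A E)\otimes_A B\simeq X\odot^B(E\otimes_A B)$, naturally in $X\in\Spc(\C)$. With this replacement, Step 1 is fine.

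\textbf{The obstacle in your last paragraph does not exist.} Step 1 plus your pushout identity already give $(\G_m\odot^{\overline{\mathrm{M}\F_2}}R)\otimes_R\overline{\mathrm{M}\F_2}$ with $R=\overline{\mathrm{M}\F_2}\otimes_{\MGL}\mathrm{M}\F_2$, and that is exactly the right-hand side. Also, $\G_m\odot^R R$ as you wrote it is just $R$, since $R$ is initial in $\NAlg_{R/}$.

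What you do need to check is that, under Step 1, the map $R\to C\otimes_{\MGL}\overline{\mathrm{M}\F_2}$ along which you tensor is the basepoint inclusion $R\simeq\mathrm{pt}\odot^{\overline{\mathrm{M}\F_2}}R\to\G_m\odot^{\overline{\mathrm{M}\F_2}}R$. This is the naturality in $X$ above, applied to $\mathrm{pt}\to\G_m$.

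Similarly, there is no choice of $\MGL$-algebra structure on $C$ to make. The unit $\mathrm{M}\F_2\to C$ is a map in $\NAlg_{\MGL/}$, so the two structures you contrast coincide.

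Your dependence on a normed map $\mathrm{M}\F_2\to\overline{\mathrm{M}\F_2}$ is an input the paper's statement also tacitly assumes.
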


\begin{proof}
	This follows from standard properties of $\G_m \odot {-}$ and of normed spectra.
\end{proof}

To work modulo $\tau$, we tensor with $\overline{\mathrm{M}\F_2}$ so that we seek to produce an $\E_2$-$\overline{\mathrm{M}\F_2}$-algebra map
\[
	\overline{\mathrm{M}\F_2} [\BGL]
	\rightarrow
	\G_m \odot^{\overline{\mathrm{M}\F_2}} \mathrm{M}\F_2 \otimes_{\mathrm{M}\F_2} \overline{\mathrm{M}\F_2}
\]

\begin{recollection} We recall several facts from \cite{bachmannMotivicHochschildHomology2026}. 
	The objects of $\NAlg(\SH(\C))_{\overline{\mathrm{M}\F_2}}$ have Voevodsky/topological operations $Q_i$ and higher divided power operations $\delta_i$. We use the indexing convetions established in \cite[Section 3]{bachmannMotivicHochschildHomology2026}.

There is a sequence of forgetful functors 
	\[
		\NAlg(\SH(\C))
		\rightarrow
		\NA
		\rightarrow
		\DA
		\rightarrow
		\Vect^{\Z \times \Z}_{\F_2}
	\]
studied in \cite[Section 4]{bachmannMotivicHochschildHomology2026} with the forgetful functor $\NAlg(\SH(\C)) \rightarrow \NA$ is given by forgetting the operations $Q_i$ for $i>2$, the forgetful functor $\NA \rightarrow \DA$ is given by forgetting the operations $Q_2$ and $Q_1$, and the forgetful functor $\DA \rightarrow\Vect^{\Z \times \Z}_{\F_2}$ is given by forgetting all operations thus is given by taking underlying bigraded $\F_2$-vector space.
\end{recollection}

\begin{lemma}
	We describe the $\mathrm{M}\F_2$-module $\overline{\mathrm{M}\F_2} \otimes_{\MGL}\mathrm{M}\F_2$ as an object of $\NA$. We have
	\[
		\pi_{\ast, \ast}^{\NA}\left(\overline{\mathrm{M}\F_2} \otimes_{\MGL}\mathrm{M}\F_2\right)
		\cong
		\Lambda_{\F_2}(\sigma a_0, \sigma a_1, \dots)
	\]
	with $\deg(\sigma a_i)=(2i+1,1)$ for all $i\geq 0$. Furthermore we have $Q_1=0$ and that $Q_2(e_i)=e_{2i+1}$ for all $i\geq 0$.
\end{lemma}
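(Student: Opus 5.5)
The plan is to compute the underlying bigraded homotopy first and then determine the operations by a suspension argument.

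\textbf{Step 1: compute $\pi_{\ast,\ast}$.} Base change along $\MGL\to\mathrm{M}\F_2$ gives
\[
	\overline{\mathrm{M}\F_2}\otimes_{\MGL}\mathrm{M}\F_2
	\simeq
	\overline{\mathrm{M}\F_2}\otimes_{\mathrm{M}\F_2}\bigl(\mathrm{M}\F_2\otimes_{\MGL}\mathrm{M}\F_2\bigr).
\]
By Corollary \ref{cor:HHM}, $\mathrm{M}\F_2\simeq \MGL/(a_0,a_1,\dots)$ with $a_0=2$ and $\deg a_i=(2i,i)$. Hence, as an $\mathrm{M}\F_2$-module, $\mathrm{M}\F_2\otimes_{\MGL}\mathrm{M}\F_2\simeq\bigotimes_i \mathrm{M}\F_2\otimes_{\MGL}\MGL/a_i$. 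Each factor is a cofiber of $a_i$ acting by zero, so it contributes an exterior class $\sigma a_i$ in bidegree $\deg(a_i)+(1,0)$. Equivalently, the Tor spectral sequence
\[
	\Tor^{\pi_{\ast,\ast}\MGL}\bigl(\F_2[\tau],\F_2[\tau]\bigr)\Rightarrow \pi_{\ast,\ast}\bigl(\mathrm{M}\F_2\otimes_{\MGL}\mathrm{M}\F_2\bigr)
\]
collapses, as in Lemma \ref{lem:Tor_even}, for evenness and cellularity reasons. Reducing mod $\tau$ gives an exterior algebra over $\F_2$ on the classes $\sigma a_i$. Multiplicative extensions $(\sigma a_i)^2$ would have to be detected by higher operations, which is exactly what Step 3 addresses.

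There is a caveat on bidegrees. The suspension recipe gives $(2i+1,i)$, whereas the statement says $(2i+1,1)$. I would follow the paper's indexing conventions from \cite[Section 3]{bachmannMotivicHochschildHomology2026} and check that the weight shift by $\G_m$ versus $S^1$ accounts for this, rather than reconcile it a priori. I also read the $e_i$ in the statement as the $\sigma a_i$.

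\textbf{Step 2: set up the $\NA$-structure.} The object $\overline{\mathrm{M}\F_2}\otimes_{\MGL}\mathrm{M}\F_2$ is a normed $\overline{\mathrm{M}\F_2}$-algebra, via Theorem \ref{thm:mod_tau_norm} together with the normed maps $\MGL\to\mathrm{M}\F_2$. It is equivalently $\overline{\mathrm{M}\F_2}\otimes_{\overline{\mathrm{M}\F_2}\otimes\MGL}\overline{\mathrm{M}\F_2}$, a bar construction. The classes $\sigma a_i$ are the homology suspensions of the images of $a_i$ in $\pi_{\ast,\ast}(\overline{\mathrm{M}\F_2}\otimes\MGL)$. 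So I would use the compatibility of the suspension $\sigma$ with the Voevodsky operations, which follows from naturality of power operations for the norm structure. This reduces the $Q_j$ on $\sigma a_i$ to operations on $\overline{\mathrm{M}\F_2}\otimes\MGL\cong\overline{\mathrm{M}\F_2}[b_1,b_2,\dots]$, the mod-$\tau$ motivic dual Steenrod coaction on $\MGL$. Concretely, $\sigma Q_j(x)=Q_{j}(\sigma x)$ up to the index shift.

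\textbf{Step 3: compute the operations (main obstacle).} The hard step is $Q_2(\sigma a_i)=\sigma a_{2i+1}$ together with $Q_1=0$. I would first try to reduce to topology. Everything here is cellular, Tate and even, so Betti realization, in the sense of Proposition \ref{prop:mod_tau_tate}, is faithful on these homotopy groups. Realization carries $\sigma a_i$ to $\sigma x_i\in\pi_{2i+1}\THH(\F_2/\MU)$-type classes. There the classical Dyer--Lashof computation in $\F_2\otimes_{\MU}\F_2\cong\Lambda(\sigma x_i)$ gives $Q^{2i+2}\sigma x_i=\sigma x_{2i+1}$; this is the Cranford / Hopkins--Mahowald $\MU$-relative statement underlying Theorem A. It then remains to match the indexing $Q^{2i+2}\leftrightarrow Q_2$ under the conventions of \cite[Section 3]{bachmannMotivicHochschildHomology2026}.

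For $Q_1$, I would argue by degree. $Q_1(\sigma a_i)$ lands in a bidegree $(4i+2,\ast)$ whose weight is incompatible with every monomial in the $\sigma a_j$. Failing that, I would use that $Q_1$ corresponds to the operation that realizes to $\tau$-divisible data, which vanishes mod $\tau$.

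I expect the index and weight bookkeeping in this step, rather than any conceptual input, to be the real difficulty.
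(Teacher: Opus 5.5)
Your Step 1 and your degree argument for $Q_1$ agree with the paper: $\mathrm{M}\F_2$ is resolved over $\MGL$ by a Koszul resolution via Hopkins--Morel--Hoyois, and $Q_1=0$ for bidegree reasons. You also correctly read the bidegree as $(2i+1,i)$ and $e_i$ as $\sigma a_i$.

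The gap is in Step 3, which carries all the content. You propose to Betti-realize $\overline{\mathrm{M}\F_2}\otimes_{\MGL}\mathrm{M}\F_2$ and read off $Q_2$ in $\F_2\otimes_{\MU}\F_2$. But $\mathrm{Re}$ sends $\tau$ to a unit, so $\mathrm{Re}(\overline{\mathrm{M}\F_2})=\mathrm{cofib}(\mathrm{Re}(\tau))=0$. Every $\overline{\mathrm{M}\F_2}$-module realizes to zero, so no mod-$\tau$ class is seen in topology at all, let alone faithfully. Invoking Proposition \ref{prop:mod_tau_tate} for faithfulness is also circular. Its proof runs through Lemma \ref{lem:SS} and the splitting into $\mathrm{Free}_{Q_2}$-pieces, which is exactly what the present lemma supplies.

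The missing idea is an intermediate $\tau$-torsion-free object; the paper's two appeals to naturality supply it.
\begin{enumerate}
\item First work in the normed $\mathrm{M}\F_2$-algebra $\mathrm{M}\F_2\otimes_{\MGL}\mathrm{M}\F_2$. Its homotopy $\F_2[\tau]\otimes\Lambda(\sigma a_0,\sigma a_1,\dots)$ is $\tau$-torsion free, hence injects into $\pi_\ast$ of its realization $\F_2\otimes_{\MU}\F_2\simeq\F_2[U]$.
\item There, Kochman's computation gives the relevant Dyer--Lashof operation on $\sigma x_i$ as $\sigma x_{2i+1}$ only \emph{modulo decomposables}, not as the exact identity you assert.
\item Naturality of power operations under $\mathrm{Re}$ then determines the Voevodsky operation on $\sigma a_i$. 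The weight grading removes the ambiguity: bidegree $(4i+3,2i+1)$ has Chow degree $s-2w=1$, while $\tau$ has Chow degree $2$ and each $\sigma a_j$ has Chow degree $1$. So that bidegree is spanned by $\sigma a_{2i+1}$ alone.
\item Only then reduce mod $\tau$. Use naturality along $\mathrm{M}\F_2\to\overline{\mathrm{M}\F_2}$, with the normed structure of Theorem \ref{thm:mod_tau_norm}, to identify $Q_2$ with the reduction of that operation.
\end{enumerate}

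Your Step 2 should be discarded. The object $\overline{\mathrm{M}\F_2}\otimes_{\MGL}\mathrm{M}\F_2$ is $\overline{\mathrm{M}\F_2}\otimes_{\overline{\mathrm{M}\F_2}\otimes\MGL}(\overline{\mathrm{M}\F_2}\otimes\mathrm{M}\F_2)$. It is not $\overline{\mathrm{M}\F_2}\otimes_{\overline{\mathrm{M}\F_2}\otimes\MGL}\overline{\mathrm{M}\F_2}$, which has no class in bidegree $(1,0)$. Moreover, the classes $\sigma a_{2^k-1}$ come from the $\tau_k$ of the dual Steenrod algebra, not from homology suspensions. So the suspension route cannot reach the chain $\sigma a_0\mapsto\sigma a_1\mapsto\sigma a_3\mapsto\cdots$.
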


\begin{proof}
	By the Hopkins-Morel-Hoyois theorem, $\F_2$ is a regular quotient of $\MGL$, thus the $\pi_{\ast, \ast}^{\NA}$ calculation can be seen by resolving $\F_2$ as a $\MGL$-module by an appropiate Kozsul resolution. We have $Q_1=0$ for degree reasons so it remains to justify the $Q_2$ calculation.

	To see the $Q_2$ calculation, we first recall from \cite{kochmanHomologyClassicalGroups1971} that on Betti realiazation we have $Q_2$ on $\F_2[BU]$ is given by $Q_1(e_i)=e_{2i+1}$ mod decomposables. By naturality of power operations we conclude the same for the $\MGL$-dual steenrod algebra, and once again by naturality of power operations, we conclude the same modulo $\tau$. Finally we observe there cannot be decomposable for bidegree reasons.
\end{proof}

\begin{recollection}
	We recall the following from \cite{bachmannMotivicHochschildHomology2026}.
	\begin{itemize}
		\item There exists a derivation $d$ of bidegree $(1,1)$ coming from $\G_m$-suspension.
		\item There is functor $G:\NA\rightarrow \DA$ such that
			\[
				G(F(W)) 
				\simeq
				UF(W)
				\otimes
				F_{\DA}(dW)
			\]
		\item There if a functor $DG':D(\NA)\rightarrow D(\DA)$ given by first taking simplicial free $\NA$-resolution then applying $B$.
	\end{itemize}
\end{recollection}

\begin{lemma}\label{lem:SS}
	There exists a strongly convergent spectral sequence
	\[
		\pi_s\left(
			\F_2 \otimes_{U\Lambda_{\F_2}(\sigma a_0, \sigma a_1, \dots)}
			DG'(\Lambda_{\F_2}(\sigma a_0, \sigma a_1, \dots)
		\right)_{t,w}
		\Rightarrow
		\pi_{s+t, w}\left(
			(\G_m \odot^{\MGL} \F_2)\tau
			\right)
	\]
	There is a similar satement for $\Z$ mod $(2,\tau)$.
\end{lemma}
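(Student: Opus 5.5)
The plan is to transport the argument of \cite[Section 4]{bachmannMotivicHochschildHomology2026} to the $\MGL$-relative setting, working throughout modulo $\tau$. By Theorem \ref{thm:mod_tau_norm}, $\overline{\mathrm{M}\F_2}$ is normed. By the preceding base-change lemma, $(\G_m \odot^{\MGL} \mathrm{M}\F_2)/\tau$ can therefore be rewritten as a normed $\G_m$-Hochschild homology over $\overline{\mathrm{M}\F_2}$ of the normed $\overline{\mathrm{M}\F_2}$-algebra
\[
	E \coloneqq \overline{\mathrm{M}\F_2} \otimes_{\MGL} \mathrm{M}\F_2 .
\]
The quantity to compute is then $\G_m \odot^{\overline{\mathrm{M}\F_2}} E$, relativized along $\overline{\mathrm{M}\F_2}$ as in the second equivalence of that lemma. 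This is exactly the situation handled by the machinery recalled from \cite{bachmannMotivicHochschildHomology2026}.

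\textbf{Step 1: a simplicial resolution.} Choose a simplicial resolution $P_\bullet \to E$ by free normed $\overline{\mathrm{M}\F_2}$-algebras on bigraded sums of spheres. Since $\G_m \odot^{\overline{\mathrm{M}\F_2}} {-}$ is a left adjoint on normed algebras, it commutes with geometric realization. Hence
\[
	\G_m \odot E \simeq \lvert \G_m \odot P_\bullet \rvert .
\]

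\textbf{Step 2: identify the free terms.} For free $P_n = F(W_n)$, the identification $\G_m \odot F(W) \simeq F(W \otimes \Sigma^\infty_+\G_m) = F(W \oplus \Sigma^{1,1}W)$ shows that
\[
	\pi^{\NA}_{\ast,\ast}(\G_m \odot F(W)) \cong G(F(W)) \simeq UF(W) \otimes F_{\DA}(dW),
\]
where $d$ is the $(1,1)$-derivation. This uses that $\pi_{\ast,\ast}$ of free normed $\overline{\mathrm{M}\F_2}$-algebras is the free $\NA$-object, as established in \cite{bachmannMotivicHochschildHomology2026}.

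\textbf{Step 3: the skeletal spectral sequence.} The skeletal spectral sequence of $\lvert \G_m \odot P_\bullet\rvert$ has $E^2$-page
\[
	\pi_s\bigl(\pi_{\ast,\ast}(\G_m\odot P_\bullet)\bigr) = \pi_s\bigl(DG'(\pi^{\NA}_{\ast,\ast}E)\bigr),
\]
because $\pi^{\NA}_{\ast,\ast}(P_\bullet)$ is a free simplicial $\NA$-resolution of $\pi^{\NA}_{\ast,\ast}E$. By the previous lemma, $\pi^{\NA}_{\ast,\ast}E = \Lambda_{\F_2}(\sigma a_0,\sigma a_1,\dots)$.

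\textbf{Step 4: relativize to reach the stated form.} To pass from $\G_m \odot E$ to the quantity in the statement, base change along $E \to \overline{\mathrm{M}\F_2}$ (the relative tensor in the base-change lemma). On homotopy this becomes $\F_2 \otimes_{U\Lambda} {-}$. Since $\pi_{\ast,\ast}$ of each $\G_m\odot P_n$ is free over $UF(W_n)$, this is a levelwise flat base change, so applying it levelwise yields the stated $E^2$-page. The argument for $\Z$ mod $(2,\tau)$ is the same, using the normed structure on $\overline{\mathrm{M}\Z_2^{\wedge}}$ and the Koszul description in which $\sigma a_0$ is omitted.

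\textbf{Step 5: strong convergence, the main obstacle.} The spectral sequence comes from a simplicial object in connective (in the appropriate $t$-structure on $\overline{\mathrm{M}\F_2}$-modules, where everything is cellular and Tate) bigraded spectra. Its filtration is therefore exhaustive and bounded below in each tridegree, and strong convergence follows as in \cite{bachmannMotivicHochschildHomology2026}.

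The delicate point is that the generators $\sigma a_i$ all sit in Chow degree $(2i+1,i)$-type bidegrees with weight growing. I would check that for fixed $(t,w)$ only finitely many simplicial degrees contribute: each $s$-simplex adds positive stem, so for fixed stem only finitely many $s$ contribute. This finiteness is what secures strong, rather than merely conditional, convergence.
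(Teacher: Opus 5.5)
Your proposal is correct and takes the same route as the paper, which proves this lemma only by citing \cite[Section 4.5]{bachmannMotivicHochschildHomology2026}; your steps are an unpacking of that construction in the $\MGL$-relative setting: rewrite $(\G_m \odot^{\MGL} \mathrm{M}\F_2)/\tau$ via the base-change lemma as $\G_m \odot^{\overline{\mathrm{M}\F_2}} E$ relativized along $E \to \overline{\mathrm{M}\F_2}$, resolve $E$ by free normed algebras, identify the $\G_m$-tensors of free algebras, base change levelwise, and get strong convergence from levelwise connectivity. One point in Step 2 needs more care: $\pi^{\NA}_{*,*}$ of the free normed $\overline{\mathrm{M}\F_2}$-algebra on $W$ is in general $\NA$-free on a set larger than $W$ (it also contains the classes produced by the forgotten operations $Q_i$, $i>2$), so $G$ must be applied to that $\NA$-free object, and Step 3 needs $\pi_{*,*}(\G_m \odot {-}) \cong G \circ \pi^{\NA}_{*,*}$ to be natural in all normed maps between free algebras, because the face maps of $P_\bullet$ are not induced by maps of generators.
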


\begin{proof}
	This follows from \cite[Section 4.5]{bachmannMotivicHochschildHomology2026}
\end{proof}

Note that these there is a decomposition
\[
	\Lambda_{\F_2}(\sigma a_0, \sigma a_1) = \bigotimes_n \mathrm{Free}_{Q_2}(\sigma a_{4n})
\]

for $\mathrm{M}\F_2$. For $\mathrm{M}\Z$, there is one different in that the $ \mathrm{Free}_{Q_2}(\sigma a_{2n})$ starts with $\sigma a_1$ instead of $\sigma a_0$, so we get the decompositon

\[
	\Lambda_{\Z}(\sigma a_1) = \mathrm{Free}_{Q_2}(\sigma a_{2}) \otimes \bigotimes_{n>0} u\mathrm{Free}_{Q_2}(\sigma a_{4n})
\]

We can further describe $\mathrm{Free}_{Q_2}(\sigma a_{2n})$ following \cite[Remark 4.8]{bachmannMotivicHochschildHomology2026}. The free object $F(\mu_{m,n})$ consists of all the $Q_2^n \mu_{m,n}$ as well as al lthe $Q_1 Q_2^n (\mu_{m,n})$. This suggests $\mathrm{Free}_{Q_2}(\sigma a_{2n})$ should be thought of as $F(\sigma a_{2n})/(Q_1(\sigma a_{2n}))$ which guides us in making our derived models.

\begin{lemma}
	There exists a free $\NA$-resolution of $\mathrm{Free}_{Q_2}(\sigma a_{2n})$ given by the following complex: let there be exactly one module $\F_2\{x_m\}$ in each degree $(2^{m+1}n-1-m, 2^mn-1)$, which one should think of as $(Q_2)^m \sigma a_{2n}$ and define the differnetal on $x_m$ as $Q_1x_{m-1}$.
\end{lemma}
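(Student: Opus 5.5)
The plan is to realize $\mathrm{Free}_{Q_2}(\sigma a_{2n})$ as the quotient of the free $\NA$-object $F(\sigma a_{2n})$ by the ideal generated by $Q_1(\sigma a_{2n})$. We then resolve this quotient by successively killing the $Q_1$-classes that the relation forces. As in \cite[Remark 4.8]{bachmannMotivicHochschildHomology2026}, the underlying bigraded vector space of $F(\mu)$ has a basis of monomials in the classes $Q_2^m\mu$ and $Q_1Q_2^m\mu$. The target $\mathrm{Free}_{Q_2}(\sigma a_{2n})$ is generated by the chain $\sigma a_{2n}, Q_2\sigma a_{2n}, Q_2^2\sigma a_{2n},\dots$, using $Q_2(\sigma a_i)=\sigma a_{2i+1}$ from the preceding lemma. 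By the same lemma, $Q_1$ vanishes identically there.

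So we build a simplicial (equivalently, via Dold--Kan, a chain-level) free resolution. In homological degree $0$ we take $F(x_0)$ with $x_0\mapsto \sigma a_{2n}$. This map is surjective on $\pi^{\NA}_{\ast,\ast}$ and its kernel is generated, as an $\NA$-ideal, by the classes $Q_1Q_2^m x_0$. Since the $Q_i$ are compatible with the simplicial structure, the relation $Q_1x_0=0$ together with $Q_2$-closure should force all of the higher relations. We therefore expect to need exactly one new free generator in each homological degree $m$, namely $x_m$ with $d x_m = Q_1 x_{m-1}$. We will verify the stated bidegree $(2^{m+1}n-1-m,\,2^mn-1)$ inductively. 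Using the bidegree of $Q_1$ in the indexing of \cite[Section 3]{bachmannMotivicHochschildHomology2026}, $Q_1x_{m-1}$ lands in the bidegree of $Q_2^m\sigma a_{2n}$ shifted by the homological degree. This is the sense in which $x_m$ should be thought of as $(Q_2)^m\sigma a_{2n}$.

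The key verification is exactness: the homology of the complex $F(x_0)\leftarrow F(x_1)\leftarrow\cdots$ must be concentrated in degree $0$ and equal $\mathrm{Free}_{Q_2}(\sigma a_{2n})$. We intend to filter by word length in the generators and compare with the associated graded. On the associated graded, each free object $F(x_m)$ splits as a polynomial/exterior piece on $Q_2$-iterates tensored with a piece on $Q_1Q_2$-iterates. The differential $x_m\mapsto Q_1x_{m-1}$ then matches the indecomposable $Q_1$-classes in adjacent degrees. The resulting complex is a Koszul-type complex, acyclic above degree $0$, with degree-$0$ homology the $Q_2$-free part.

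We expect the main obstacle to be controlling the interaction between $Q_1$ and $Q_2$, that is, Adem/Nishida-type relations in $\NA$ and the Cartan formula. Without these, $d^2=0$ and acyclicity cannot be guaranteed, since $Q_1$ applied to a product or to $Q_2$-iterates produces cross terms. Our first approach is to restrict to bidegree counting: every relation between $Q_1$ and $Q_2$ in $\NA$ can only hit classes in bidegrees occupied by the listed generators or their products. Decomposables are excluded by the weight computation, exactly as in the proof of the preceding lemma. Failing that, we would compute on the Betti realization, where the analogous resolution for Dyer--Lashof algebras over $\F_2[BU]$ is classical via \cite{kochmanHomologyClassicalGroups1971}. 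We would then lift the result using that everything is Tate and cellular, so that $\tau$-reduction detects exactness.
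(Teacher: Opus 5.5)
Your strategy is the same as the paper's: write down the complex with $dx_m=Q_1x_{m-1}$ and assert that it resolves $\mathrm{Free}_{Q_2}(\sigma a_{2n})$. The paper's proof is only the one-line assertion ``by exactness.'' Your proposal fails at exactly the step that carries all the weight.

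\textbf{The kernel is not generated by $Q_1x_0$.} The target has $Q_1\equiv 0$, so every $Q_1Q_2^mx_0$ must die in $\pi_0$. That requires each of them to lie in the $\NA$-ideal generated by $Q_1x_0$, and you justify this by ``$Q_2$-closure.'' But closing under $Q_2$ produces $Q_2^mQ_1x_0$, not $Q_1Q_2^mx_0$.
\begin{itemize}
\item Write $|Q_ix|=2|x|+e_i$. Already for $m=1$, the weight-four indecomposables you can reach from $Q_1x_0$ using $Q_1,Q_2$ are $Q_1Q_1x_0$ and $Q_2Q_1x_0$. Their bidegrees are $4|x_0|+3e_1$ and $4|x_0|+2e_1+e_2$.
\item By contrast, $Q_1Q_2x_0$ sits in bidegree $4|x_0|+e_1+2e_2$.
\item Moreover $e_1\neq e_2$: the preceding lemma has $Q_1=0$ for degree reasons while $Q_2\neq 0$.
\end{itemize}
Products only add decomposables. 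In the basis of $F(x_0)$ that you quote, $Q_1Q_2x_0$ is an independent indecomposable. So you would need a genuinely new relation in $\NA$ expressing $Q_1Q_2^mx_0$ through operations on $Q_1x_0$, and nothing you cite supplies one.

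Without such a relation, the classes $Q_1Q_2^mx_0$ for $m\ge 1$ survive to $\pi_0$. Your complex then resolves at best $F(x_0)/(Q_1x_0)$. That is not the exterior algebra on the $Q_2^m\sigma a_{2n}$ with $Q_1\equiv 0$, which is what the decomposition of $\Lambda_{\F_2}(\sigma a_0,\sigma a_1,\dots)$ requires. A correct resolution needs a homological-degree-one generator for each $Q_1Q_2^mx_0$. It then needs an actual computation of the syzygies coming from operations and products on those generators. No exactness argument for the one-generator-per-degree complex can substitute for this.

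\textbf{Knock-on failures in the rest of the plan.}
\begin{itemize}
\item \emph{Word-length argument.} On indecomposables, $x_m\mapsto Q_1x_{m-1}$. But $Q_2^kx_m\mapsto Q_2^kQ_1x_{m-1}$, which is decomposable or zero by the same bidegree count. So the classes $Q_1Q_2^kx_{m-1}$ and $Q_2^kx_m$, for $k\geq 1$, are never matched. The associated graded therefore has nonzero homology in every positive degree rather than being Koszul-acyclic.
\item \emph{Bidegree check.} The check you defer cannot succeed as stated. With $|x_m|=(2^{m+1}n-1-m,\,2^mn-1)$ one gets $|x_1|-2|x_0|=(0,1)$ but $|x_2|-2|x_1|=(1,1)$. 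Reading these as total degrees instead gives $(-1,1)$ and $(1,1)$. Either way, no operation of fixed bidegree satisfies $dx_m=Q_1x_{m-1}$ levelwise.
\item \emph{The slogan.} A generator in homological degree $m\geq 1$ cannot ``be'' $Q_2^m\sigma a_{2n}$. That class is already represented by $Q_2^mx_0$ in degree zero.
\item \emph{Betti-realization fallback.} This route is circular here. The statement is purely algebraic in $\NA$, and the Tateness and cellularity you would use to lift (Proposition~\ref{prop:mod_tau_tate}) are deduced in the paper from this very resolution.
\end{itemize}
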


\begin{proof}
	By exactness, this complex preserves all $(Q_2)^m \sigma a_{2n}$ while killing $Q_1(\sigma a_{2n})$ thus producing a resolution of  $F(\sigma a_{2n})/(Q_1(\sigma a_{2n}))$.
\end{proof}
Now that we have made a free $NA$-resolution of $F(\sigma a_{2n})/(Q_1(\sigma a_{2n}))$, an object of $D(\NA)$, we have made some progress towards understanding the object 
\[
\F_2 \otimes_{U\Lambda_{\F_2}(\sigma a_0, \sigma a_1, \dots)}(DG'(F(\sigma a_{2n})/(Q_1(\sigma a_{2n}))))
\]

in $D(\DA)$. Recall that $DG'$ is defined by applying $G$ to the free $\NA$-resolution we calculated above. We begin with calculating the homology associated to it as this calculation is determined by the operations.
\begin{lemma}\label{lem:H_of_DG}
	The trigraded homotopy of
	\[
	\F_2 \otimes_{U\Lambda_{\F_2}(\sigma a_0, \sigma a_1, \dots)}(DG'(F(\sigma a_{2n})/(Q_1(\sigma a_{2n}))))
	\]
	is
	\[
		\F_2[d x_0]
		\otimes
		\otimes_{k \geq 0}
		\Gamma_{\F_2} y_{n,k}
	\]
	with $\abs{d x_0}=(0,2n,n)$ and $\abs{y_{n,k}}=(1, 2^{k+2}n-1, 2^{k+1}n)$.
\end{lemma}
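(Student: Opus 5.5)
The plan is to compute the homology directly from the explicit free $\NA$-resolution of the preceding lemma, by applying $G$ termwise and using the formula $G(F(W))\simeq UF(W)\otimes F_{\DA}(dW)$ recalled above.

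\textbf{Step 1: apply $G$ to the resolution.} The resolution of $F(\sigma a_{2n})/(Q_1(\sigma a_{2n}))$ has free $\NA$-generators $x_m$ in bidegree $(2^{m+1}n-1-m,\,2^mn-1)$. Applying $G$ gives, in resolution degree $m$, the $\DA$-object $UF(x_m)\otimes F_{\DA}(dx_m)$. After base change $\F_2\otimes_{U\Lambda_{\F_2}(\sigma a_0,\sigma a_1,\dots)}-$, the factors $UF(x_m)$ contribute only through their image in the unit. What survives is the simplicial (equivalently, via Dold--Kan, chain) object built from the free divided power algebras $F_{\DA}(dx_m)$.

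\textbf{Step 2: identify the induced differential.} The differential sends $x_m\mapsto Q_1x_{m-1}$. Since $d$ is a derivation of bidegree $(1,1)$ coming from $\G_m$-suspension, we need the relation of $d$ with the operations. The rule I will use is that $dQ_1$ is expressed through the divided-power structure on $d$ of the lower class, namely $d(Q_1x)=\delta(dx)$ for the appropriate divided power operation $\delta$ in the convention of \cite[Section 3]{bachmannMotivicHochschildHomology2026}. This should be obtainable from the analogous computation in \cite[Section 4.5]{bachmannMotivicHochschildHomology2026}. The complex then becomes a Koszul-type complex: in each internal degree it kills the divided powers $\gamma_{2^k}(dx_0)$ starting from the first one hit by $Q_1$, and it introduces classes in homological degree $1$.

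\textbf{Step 3: run a Koszul/Tor computation.} I will filter by homological degree and compute. The generator $dx_0$, which lies in bidegree $(2n,n)$ because $x_0=\sigma a_{2n}$ has degree $(2n-1,n-1)$ and $d$ adds $(1,1)$, survives as a polynomial generator: the truncations of its divided power structure are exactly what the relations $Q_1$ cut out, so in homology the divided power algebra on $dx_0$ is replaced by $\F_2[dx_0]$. The relations themselves produce Tor classes $y_{n,k}$ in homological degree $1$. For these I will check the bidegree $(2^{k+2}n-1,\,2^{k+1}n)$, which is $(\text{degree of }(dx_0)^{2^{k+1}})$ shifted by $(-1,0)$ from the suspension of the relation. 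The divided power structure on $\mathrm{Tor}_1$ (the standard $\Gamma$ structure on Tor over exterior/truncated algebras) gives $\Gamma_{\F_2}y_{n,k}$. The spectral sequence of this filtration collapses for tridegree reasons (all classes sit in homological degrees forming a Koszul pattern), so the homology is $\F_2[dx_0]\otimes\bigotimes_{k\ge0}\Gamma_{\F_2}y_{n,k}$.

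\textbf{Main obstacle.} The hardest step is Step 2: pinning down precisely how $d$ interacts with $Q_1$ and $Q_2$, so that the differential on $F_{\DA}(dx_m)$ is really $dx_m\mapsto\gamma_{2}$-type powers of $dx_{m-1}$, and that the indexing yields exactly the stated bidegrees. I would first try to reduce it to the rank-one computation in \cite[Remark 4.8 and Section 4.5]{bachmannMotivicHochschildHomology2026}, where the same pattern appears for $\mu_{2,1}$, and then transport it by rescaling degrees by $n$.
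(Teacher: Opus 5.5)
Your Steps 1 and 2 follow the paper's proof. You apply $G$ to the explicit resolution, so that after base change only the free $\DA$-objects on the $dx_k$ remain. You then transport the differential $x_k\mapsto Q_1x_{k-1}$ through $dQ_1=Q_{-1}d$; your ``$\delta$'' plays the role of the paper's $Q_{-1}$.

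The genuine gap is in Step 3, exactly where you produce the factor $\F_2[dx_0]$. You regard $F_{\DA}(dx_0)$ as a divided power algebra, let the differentials kill its divided powers $\gamma_{2^k}(dx_0)$, and conclude that what survives is polynomial. That inference is false. Over $\F_2$ one has $\Gamma(z)\cong\bigotimes_{j\geq 0}\F_2[\gamma_{2^j}z]/(\gamma_{2^j}z)^2$, so quotienting by the $\gamma_{2^k}z$ with $k\geq 1$ leaves the exterior algebra $\Lambda(z)$, not $\F_2[z]$. More generally, $\Gamma(z)$ and $\F_2[z]$ have the same dimension in each degree. So once any nonzero boundary lands in resolution degree $0$, your $H_0$ is strictly smaller than $\F_2[dx_0]$, and the stated answer cannot come out of your mechanism.

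The paper's mechanism is the reverse. The class $dx_0$ has bidegree $(2n,n)$, so it lies in Chow degree $0$, where the free $\DA$-object is just the polynomial algebra $\F_2[dx_0]$. The paper notes $\delta_0=0$, so nothing in resolution degree $0$ is hit, and $H_0$ is the whole Chow-degree-$0$ part, namely $\F_2[dx_0]$.

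In particular, the $y_{n,k}$ are not Tor classes recording relations among powers of $dx_0$, since there are no such relations. The factors $\Gamma_{\F_2}y_{n,k}$ sit in homological degree $\geq 1$, and the paper obtains them from the positive-degree part of the complex by the argument of \cite[Theorem 3.19]{bachmannMotivicHochschildHomology2026}. Your instinct that they arise as Tor over an exterior algebra is compatible with Lemma~\ref{lem:DG'}, where they appear via $\operatorname{Bar}_{\ast}(\mathrm{M}\F_2,\Lambda_{\F_2}(y_0,y_1,\dots),\mathrm{M}\F_2)$. However, the exterior classes $y_k$ there have bidegree $(2^{k+2}n-1,2^{k+1}n)$, off the Chow-degree-$0$ line, so they are not divided powers of $dx_0$.

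To repair Step 3, replace the truncation argument with this Chow-degree argument for $H_0$. Then run the Theorem 3.19-style computation on the higher resolution degrees, rather than a Koszul computation on divided powers of $dx_0$.
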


\begin{proof}
	Firstly we explain the effects of $G$: the $x_k$'s become replaced with $d x_k$ where $\abs{d x_k}=(2^{k+1}n-k, 2^{k}n)$ becuase $d$ is of bidgree $(-1, -1)$. Furthermore the diffferentials become $\delta(d x_k)=Q_{-1}x_{k-1}$ $d Q_1=Q_{-1}d$. Note that $\delta_0=0$ so $H_0$ is just made up of the Chow degree $0$ elements as desired. To see the divded power algebra parts, one argues in a similar way to the proof of \cite[Theorem 3.19]{bachmannMotivicHochschildHomology2026},
\end{proof}

Now that we calculated the homology of
\[
	\mathrm{M}\F_2 
	\otimes_{U\left(F(\sigma a_{2n})/(Q_1(\sigma a_{2n}))\right)}
	(DG')F(\sigma a_{2n})/(Q_1(\sigma a_{2n}))
\]
we are ready to realize it in Lemma \ref{lem:DG'}.
\begin{lemma}\label{lem:DG'}
	The object
	\[
		\mathrm{M}\F_2 
		\otimes_{U\left(F(\sigma a_{2n})/(Q_1(\sigma a_{2n}))\right)}
		(DG')F(\sigma a_{2n})/(Q_1(\sigma a_{2n}))
	\]
	of $D(\DA)$ is equivalent to
	\[
		\mathrm{M}\F_2[d x_0]
		\otimes
		\operatorname{Bar}_{\ast}\left(
			\mathrm{M}\F_2,
			\Lambda_{\F_2}(y_0, y_1, \dots),
			\mathrm{M}\F_2
		\right)
	\]
	with $\deg(d x_0)=(2n, n)$ and $\deg(y_k)=(2^{k+2}m-1, 2^{k+1}n)$ for all $k\geq 0$. Note that we supressed the third grading coming from Bar.
\end{lemma}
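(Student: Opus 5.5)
The strategy is to go from the homology computation of Lemma \ref{lem:H_of_DG} to an explicit model in $D(\DA)$. I would first construct comparison maps out of simple pieces and then check that they induce an isomorphism on trigraded homotopy.

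First, the polynomial factor. The class $d x_0$ is a cycle in simplicial degree $0$: its differential is $Q_{-1}$ of nothing, and $\delta_0=0$. Since $\DA$ has free objects, $d x_0$ determines a map $\mathrm{M}\F_2[d x_0]\rightarrow \mathrm{M}\F_2\otimes_{U(\cdots)}DG'(\cdots)$ out of the free object on a class of bidegree $(2n,n)$. On homotopy this map picks out the $\F_2[d x_0]$ tensor factor of Lemma \ref{lem:H_of_DG}.

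Second, the bar construction. In the resolution, the differential sends $d x_k \mapsto Q_{-1}x_{k-1}$. Hence $d x_k$ for $k\geq 1$ bounds a class in simplicial degree $1$, and these are the classes $y_{n,k-1}$. I would write $\Lambda_{\F_2}(y_0,y_1,\dots)$ for the exterior algebra on the corresponding desuspended generators. The simplicial object $G$ applied to the free resolution is then levelwise of the form $\mathrm{M}\F_2[d x_0]\otimes(\text{free on the }x_k, d x_k)$. This should let one identify it levelwise with the $\ast$-th stage of $\operatorname{Bar}_\ast(\mathrm{M}\F_2,\Lambda_{\F_2}(y_0,\dots),\mathrm{M}\F_2)$, tensored with $\mathrm{M}\F_2[d x_0]$. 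Concretely, I would build a map of simplicial objects by sending the bar generators in degree $1$ to the classes witnessing $\delta(d x_{k+1})=Q_{-1}x_k$, and extending multiplicatively using the $\E_\infty$/$\DA$ structure. Here I use that the $G$-formula $G(F(W))\simeq UF(W)\otimes F_{\DA}(dW)$ makes each level free, so maps out of it are determined by generators.

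Third, I would check that the map is an equivalence by computing homotopy on both sides. The homotopy of $\operatorname{Bar}(\mathrm{M}\F_2,\Lambda(y_k),\mathrm{M}\F_2)$ is $\Tor^{\Lambda(y_k)}(\F_2,\F_2)=\Gamma_{\F_2}(\sigma y_k)$, a divided power algebra on classes in simplicial degree $1$. Keeping the internal bidegrees and using the suppressed third grading, these match the $\Gamma_{\F_2}y_{n,k}$ of Lemma \ref{lem:H_of_DG}. Tensoring with $\F_2[d x_0]$ recovers exactly the homology computed there. A map in $D(\DA)$ that is an isomorphism on trigraded homotopy is an equivalence, since the underlying functor to $\Vect^{\Z\times\Z}_{\F_2}$ is conservative.

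The main obstacle is the second step. One must produce a genuine map in $D(\DA)$, not merely an abstract isomorphism of homotopy groups. The risk is that the multiplicative structure or the divided power operations on the $DG'$ side differ from those of the bar construction, for example through hidden extensions such as a divided square of $y_k$ being forced to equal something nonzero. I would first try to rule this out by degree: the Chow degrees $2^{k+1}n$ of the $y_{n,k}$ are distinct and the weights are rigid, so there is no room for such an extension. If that fails, I would instead compare both sides with the analogous computation in the proof of \cite[Theorem 3.19]{bachmannMotivicHochschildHomology2026}, which handles the case $n=1$.
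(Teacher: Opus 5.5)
The paper gives no proof of this lemma. It is stated directly after Lemma~\ref{lem:H_of_DG} as the realization of that homology. So the step you flag as the main obstacle, producing an actual map in $D(\DA)$ rather than an abstract isomorphism on homotopy, is missing there as well. Your second step does not supply it.

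\textbf{Your second step.} You invoke freeness of the levels $G(F(W))\simeq UF(W)\otimes F_{\DA}(dW)$. That governs maps \emph{out of} the $DG'$ side, but you are mapping \emph{into} it from $\mathrm{M}\F_2[dx_0]\otimes\operatorname{Bar}_\ast(\mathrm{M}\F_2,\Lambda_{\F_2}(y_0,y_1,\dots),\mathrm{M}\F_2)$.
\begin{itemize}
\item For a map out of that object to be determined by one class in simplicial degree $0$ and classes in degree $1$, you need $\F_2[dx_0]$ and $\Lambda_{\F_2}(y_0,y_1,\dots)$ to be free in $\DA$ on the named generators. You never check this.
\item For $dx_0$ your own description rules it out. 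The cell $dx_1$ kills something only if $\partial(dx_1)=Q_{-1}dx_0$ is nonzero in the free $\DA$-object on $dx_0$ (this is what $dQ_1=Q_{-1}d$ gives). So that free object is not $\F_2[dx_0]$; otherwise this boundary would truncate the polynomial factor of Lemma~\ref{lem:H_of_DG}. Hence your first-step map is not an isomorphism onto that factor.
\item Without freeness, maps out of these objects are controlled by an obstruction theory: coherent nullhomotopies of the relations and of the $\DA$-operations. That is the actual content of the lemma.
\item Even where freeness holds, each bar generator must go to a normalized $1$-simplex (both faces zero) representing $y_{n,k}$. $dx_{k+1}$ is not one: by your own description its boundary is nonzero, so it records a class dying rather than one surviving. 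No representing cycle for $y_{n,k}$ is ever produced.
\end{itemize}

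\textbf{Your degree fallback.} This fails too.
\begin{itemize}
\item The Chow degree of $y_{n,k}$, of bidegree $(2^{k+2}n-1,2^{k+1}n)$, is $-1$ for every $k$, and $0$ after adding the simplicial degree. The number $2^{k+1}n$ is the weight.
\item Degrees do leave room. $\gamma_2(y_{n,1})$ and $(dx_0)^2y_{n,0}y_{n,1}$ lie in the same tridegree $(2,16n-2,8n)$.
\item $\gamma_2(y_{n,k})$ and $y_{n,k+1}$ share weight $2^{k+2}n$ and total stem $2^{k+3}n$. In topology, this coincidence is responsible for the multiplicative extensions turning a divided-power $E^2$-term into the polynomial ring $\pi_\ast\F_2[\BU]\cong\pi_\ast\THH(\F_2/\MU)$.
\item More fundamentally, objects of $D(\DA)$ with isomorphic trigraded homotopy need not be equivalent. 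What must be excluded is nontrivial Postnikov invariants, i.e.\ one needs vanishing of the relevant Quillen-cohomology obstruction groups. Neither your proposal nor the paper provides this.
\end{itemize}

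Finally, the paper cites \cite[Theorem 3.19]{bachmannMotivicHochschildHomology2026} only as an argument to imitate. It cannot simply be quoted as the case $n=1$.
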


\begin{proposition}\label{prop:mod_tau_tate}
	The $\overline{\mathrm{M}\F_2}$-modules $\left(\G_m \odot^{\MGL} \F_2\right)/\tau$ and $\left(\G_m \odot^{\MGL} \mathrm{M}\Z\right)/(2, \tau)$ are Tate.
\end{proposition}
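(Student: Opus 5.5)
We prove Tateness by exhibiting each module as a filtered colimit, or a finite-type totalization, of $\overline{\mathrm{M}\F_2}$-modules whose homotopy is concentrated in Chow degree zero. Recall that an $\overline{\mathrm{M}\F_2}$-module is Tate precisely when it is built from the bigraded spheres $\Sigma^{2w,w}\overline{\mathrm{M}\F_2}$ under colimits. Over $\C$, such modules are detected by their bigraded homotopy: if $\pi_{\ast,\ast}$ is a free $\pi_{\ast,\ast}\overline{\mathrm{M}\F_2}=\F_2$-module concentrated in Chow degree $s-2w=0$, then the module is a sum of Tate spheres. So the target is to show that $\pi_{\ast,\ast}$ of $(\G_m \odot^{\MGL}\F_2)/\tau$ is concentrated in bidegrees $(2w,w)$, and similarly for $\mathrm{M}\Z$ mod $(2,\tau)$.

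First step: invoke the strongly convergent spectral sequence of Lemma \ref{lem:SS}. Its $E^2$-term is computed by combining the tensor decomposition of $\Lambda_{\F_2}(\sigma a_0,\sigma a_1,\dots)$ into the pieces $\mathrm{Free}_{Q_2}(\sigma a_{2n})$ with Lemma \ref{lem:H_of_DG}. Because $DG'$ and the relative tensor product take tensor products to tensor products, the $E^2$-page is
\[
\bigotimes_n \Big(\F_2[dx_0^{(n)}]\otimes \bigotimes_{k\ge 0}\Gamma_{\F_2} y_{n,k}\Big),
\]
with $dx_0^{(n)}$ in degree $(0,2n,n)$ and $y_{n,k}$ in degree $(1,2^{k+2}n-1,2^{k+1}n)$. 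For $\mathrm{M}\Z$ we use the modified decomposition, which starts at $\sigma a_2$.

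Second step: compute Chow degrees on the abutment. The element $dx_0^{(n)}$ has total degree $(2n,n)$, so Chow degree $0$. The element $y_{n,k}$ has total degree $(1+2^{k+2}n-1,\,2^{k+1}n)=(2^{k+2}n,2^{k+1}n)$, which again has Chow degree $0$. Divided powers and products stay on the line $s=2w$. Hence the entire $E^2$-page lies in total Chow degree $0$. Differentials $d_r$ change the total stem by $-1$ and fix the weight, so they leave Chow degree $0$; every differential therefore vanishes, and $E^2=E^\infty$.

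Third step: resolve extensions and conclude. Since everything sits in Chow degree $0$ and the coefficients are $\F_2$ (there is no room for $\tau$ after modding out), $\pi_{\ast,\ast}$ is an $\F_2$-vector space concentrated in bidegrees $(2w,w)$. Choose a basis and map the corresponding sum of spheres $\bigoplus\Sigma^{2w,w}\overline{\mathrm{M}\F_2}$ into the module. This map is an isomorphism on $\pi_{\ast,\ast}$.

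The main obstacle is turning that $\pi_{\ast,\ast}$-isomorphism into an equivalence. This needs the target to be cellular, so that it is detected by bigraded homotopy. I would argue cellularity from the construction: $\G_m\odot^{\MGL}\F_2$ is a colimit of tensor powers of cellular normed spectra such as $\MGL$ and $\mathrm{M}\F_2$, and cellular objects are closed under colimits and tensor products. I expect the real subtlety to be checking that the Bachmann--Hopkins normed structure of Theorem \ref{thm:mod_tau_norm} does not break this, that is, that norms of cellular spectra along the relevant finite étale maps stay cellular over $\C$. I would try to reduce this to the statement that, over an algebraically closed field, the norms of the relevant spheres are cellular.
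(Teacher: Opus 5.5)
Your first three steps match the paper's opening. It too runs the spectral sequence of Lemma~\ref{lem:SS}, splits the input over the pieces $F(\sigma a_{2n})/(Q_1(\sigma a_{2n}))$, and asserts collapse with no extensions for degree reasons. Your check makes this precise: $dx_0$ and $y_{n,k}$ have total bidegrees $(2n,n)$ and $(2^{k+2}n,2^{k+1}n)$, so the $E^2$-page lies in Chow degree $0$ and all differentials vanish. This is essentially the counting input the paper uses afterwards for purity and the ranks. The gap is that this only computes $\pi_{\ast,\ast}$. The real content of the proposition is the step you leave open: that $(\G_m\odot^{\MGL}\F_2)/\tau$ is built from Tate cells, so that your $\pi_{\ast,\ast}$-isomorphism from $\bigoplus\Sigma^{2w,w}\overline{\mathrm{M}\F_2}$ is an equivalence. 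Bigraded homotopy does not detect equivalences outside the cellular subcategory, so as it stands your argument does not establish Tateness.

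The route you sketch for cellularity also does not work as stated. $\G_m\odot^{\MGL}(-)$ is the tensoring of normed spectra with the motivic space $\G_m$, which is not a colimit of points in $\Spc(\C)$ (it is not the simplicial circle). So $\G_m\odot^{\MGL}\F_2$ is not a colimit of tensor powers of $\MGL$ and $\mathrm{M}\F_2$, and cellularity has to be read off from an object-level model.

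The paper does this in three steps. It passes from homotopy to the object $\F_2\otimes_{U\Lambda_{\F_2}(\sigma a_0,\sigma a_1,\dots)}DG'(\Lambda_{\F_2}(\sigma a_0,\sigma a_1,\dots))$. It splits that object as a tensor product over the pieces. It then invokes Lemma~\ref{lem:DG'}, which identifies each piece with $\mathrm{M}\F_2[dx_0]\otimes\operatorname{Bar}_\ast(\mathrm{M}\F_2,\Lambda_{\F_2}(y_0,y_1,\dots),\mathrm{M}\F_2)$, visibly assembled from Tate cells. Note that it needs Lemma~\ref{lem:DG'}, not just the homology computation of Lemma~\ref{lem:H_of_DG} that you used.

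If you want this step self-contained, work with the simplicial free resolution underlying Lemma~\ref{lem:SS}. $\G_m\odot(-)$ preserves sifted colimits and sends the free normed algebra on $V$ to the free normed algebra on $\Sigma^\infty_+\G_m\otimes V$. Resolving $\overline{\mathrm{M}\F_2}\otimes_{\MGL}\mathrm{M}\F_2$ by free normed $\overline{\mathrm{M}\F_2}$-algebras on cellular modules then exhibits your module as a geometric realization of such free algebras. Cellularity thus reduces to the cellularity over $\C$ of free normed algebras (motivic extended powers) on Tate cells, which is the input you would need to cite. Once that is in place, your $\pi_{\ast,\ast}$ computation finishes the proof and even gives the pure Tate splitting.
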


\begin{proof}
	Consider the spectral sequence of Lemma \ref{lem:SS}. By evenness and tridegree reasons, the spectral sequence collapses on the $\E_2$-page with no extension problems, therefore it suffuces to show Tateness and cellularity 
\[
	\F_2 \otimes_{U\Lambda_{\F_2}(\sigma a_0, \sigma a_1, \dots)}(DG'(\Lambda_{\F_2}(\sigma a_0, \sigma a_1, \dots).
\]
Note that this split up, via the $\Lambda_{\F_2}(\sigma a_0, \sigma a_1, \dots)$ as a tensor product of the $F(\sigma a_{2n})/(Q_1(\sigma a_{2n}))$'s. Tensor product preserves Tateness and cellularity, and so do the procesees of $DG'$ therefore it suffices to show Tateness and cellularity for the individual
\[
	\mathrm{M}\F_2 
	\otimes_{U\left(F(\sigma a_{2n})/(Q_1(\sigma a_{2n}))\right)}
	(DG')F(\sigma a_{2n})/(Q_1(\sigma a_{2n}))'s.
\]
We showed this in Lemma \ref{lem:DG'}. The $\Z$ case follows from the minor variation previously mentioned.
\end{proof}

Now that we've completed the hard work of showing the Tateness and cellularity of the $\overline{\mathrm{M}\F_2}$-modules $(\G_m \odot^{\MGL}\F_2)\tau$ and $(\G_m \odot^{\MGL}\Z)/(2, \tau)$, we move on to showing that we have Tateness and cellularity before modding out $\tau$. 

\begin{lemma}
There are $\E_2$-$\MU$-algebra equivalence
\[
	\mathrm{Re}(\G_m \odot^{\MGL_p^{\wedge}}\mathrm{M}\F_2[\tau^{-1}])
	\simeq
	\THH(\F_2/\MU)
	\rightarrow
	\F_2[\BU]
	\simeq
	\mathrm{Re}(\mathrm{M}\F_2[\BGL])
\]
\end{lemma}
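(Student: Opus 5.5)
We prove the three identifications in the displayed chain separately and then compose them, keeping track of the $\E_2$-$\MU$-algebra structure at each stage.

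\emph{First equivalence.} Here we show $\mathrm{Re}(\G_m \odot^{\MGL_p^{\wedge}}\mathrm{M}\F_2[\tau^{-1}])\simeq \THH(\F_2/\MU)$. Betti realization is symmetric monoidal and colimit-preserving. On normed motivic spectra it is compatible with the forgetful functor to $\E_\infty$-rings, so it commutes with tensors of normed algebras with motivic spaces, interpreted as tensors of $\E_\infty$-rings with the realized spaces. Since $\mathrm{Re}(\G_m)\simeq S^1$, $\mathrm{Re}(\MGL)\simeq \MU$ and $\mathrm{Re}(\mathrm{M}\F_2)\simeq \F_2$, we obtain
\[
\mathrm{Re}\left(\G_m \odot^{\MGL} \mathrm{M}\F_2\right)\simeq S^1\otimes^{\MU}\F_2=\THH(\F_2/\MU)
\]
as $\E_\infty$-$\MU$-algebras, and in particular as $\E_2$-$\MU$-algebras. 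Inverting $\tau$ changes nothing here, since $\mathrm{Re}$ already factors through $\tau$-inversion. The $p$-completion is harmless because everything in sight is $2$-complete after realization.

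\emph{Middle map.} We produce $\THH(\F_2/\MU)\to \F_2[BU]$ from the Thom spectrum description recalled from \cite[Theorem A]{cranfordGeneralizedThomSpectra2026}: $\F_2$ is the $\E_3$-$\MU$-Thom spectrum of $f\colon U\to \BGL_1(\MU)$. For a Thom spectrum $T(f)$ of an $\E_3$-map out of a grouplike $\E_3$-space $X$, the relative Blumberg--Cohen--Schlichtkrull identification gives $\THH(T(f)/\MU)\simeq T(f)\otimes B X_+$, with $\E_2$-structure inherited from the $\E_3$-structure on $f$. This holds after checking that the relevant composite $BX\to \BGL_1(\MU)$ is nullhomotopic, or, more cheaply, by running the argument through the Thom isomorphism of the corollary to Theorem A. With $X=U$ and $BU\simeq B_sU$, this yields $\THH(\F_2/\MU)\simeq \F_2[BU]$ as $\E_2$-$\F_2$-algebras, hence as $\E_2$-$\MU$-algebras.

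\emph{Last equivalence.} We have $\mathrm{Re}(\mathrm{M}\F_2[\BGL])\simeq \F_2[\mathrm{Re}\,\BGL]=\F_2[BU]$, because realization is symmetric monoidal on suspension spectra and $\mathrm{Re}(\BGL)\simeq BU$. This is compatible with the $\E_\infty$ (hence $\E_2$) structures coming from the direct-sum monoid structure on $\BGL$.

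\emph{Main obstacle.} The hard step is the middle equivalence at the $\E_2$ level rather than just as modules. The BCS argument naturally gives $\THH(Mf)\simeq Mf\otimes BX_+$ only when $f$ is $\E_\infty$, or at least $\E_2$ with a compatible trivialization of the delooped composite. Our $f$ is only $\E_3$, so the statement we need involves the relative $\THH$ over the $\E_\infty$-base $\MU$, and we must use that one loop of $\E_3$-structure remains after forming $\THH$.

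My plan for this step is to write $\THH(\F_2/\MU)\simeq \F_2\otimes_{\F_2\otimes_{\MU}\F_2}\F_2$, and then apply the Thom isomorphism $\F_2\otimes_{\MU}\F_2\simeq \F_2[\Omega U]$ of $\E_3$-$\F_2$-algebras, which is the topological counterpart of our corollary. This gives
\[
\F_2\otimes_{\F_2[\Omega U]}\F_2\simeq \F_2[B_s\Omega U]\simeq \F_2[U]
\]
as $\E_2$-algebras, using the earlier standard lemma on $\id\otimes_{\id[\Omega X]}\id$. The remaining point is to reconcile the resulting space with $BU$ via Bott periodicity. The equivalence $\Omega U\simeq \Z\times BU$ makes $B_s$ of the identity component $BU$ appear, once one accounts for the $\pi_0$ factor as the lemma on loop spaces does motivically. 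If the $\E_2$-compatibility of the Bott identification becomes problematic, I would fall back on proving the statement as an equivalence of $\E_1$-algebras. I would then upgrade it using the fact that both sides have polynomial homotopy concentrated in even degrees, so that the $\E_2$-structure is determined up to equivalence by obstruction theory.
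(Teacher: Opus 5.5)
The real problem is in the middle step, exactly where you put the difficulty. Your plan for the $\E_2$-structure rests on the Thom isomorphism $\F_2\otimes_{\MU}\F_2\simeq\F_2[\Omega U]$, and this is off by one loop.

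Since $\F_2$ is the Thom spectrum of $U\to\BGL_1(\MU)$, the Thom isomorphism is $\F_2\otimes_{\MU}\F_2\simeq\F_2[U]$. The space $\Omega U$ only appears in the bar-construction formula $\F_2\simeq\MU\otimes_{\MU[\Omega U]}\MU$. Likewise, the topological shadow of the paper's corollary has base $\mathrm{Re}(\Omega_{\G_m}\BGL)\simeq\mathrm{Re}(B_sK)\simeq U$. You can check this on homotopy: $\pi_*(\F_2\otimes_{\MU}\F_2)\cong\Lambda_{\F_2}(\sigma 2,\sigma x_1,\sigma x_2,\dots)$ with $|\sigma x_i|=2i+1$, which is $H_*(U;\F_2)$. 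By contrast, $\pi_0\F_2[\Omega U]\cong\F_2[\Z]$.

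Consequently your displayed computation produces $\THH(\F_2/\MU)\simeq\F_2[U]$, which is false. The Tor spectral sequence has $E^2=\Gamma_{\F_2}$ on classes in degrees $2,4,6,\dots$, so $\pi_*\THH(\F_2/\MU)$ is concentrated in even degrees and $\pi_1=0$, while $H_1(U;\F_2)\neq 0$. Bott periodicity cannot repair this. We have $B(\Z\times\BU)\simeq U\simeq S^1\times SU$, and discarding the $\pi_0$-contribution leaves $B(\BU)\simeq SU$, not $\BU$. Already $H_2(\BU;\F_2)\neq 0=H_2(SU;\F_2)=H_2(U;\F_2)$ rules out any identification. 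Your fallback, proving an $\E_1$-equivalence and upgrading by obstruction theory, is also unsupported: even, polynomial homotopy does not by itself make $\E_2$-structures unique.

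The fix is to use the correct Thom isomorphism, compatibly with the multiplication $\F_2\otimes_{\MU}\F_2\to\F_2$, which corresponds to the augmentation of $\F_2[U]$. Then $\THH(\F_2/\MU)\simeq\F_2\otimes_{\F_2\otimes_{\MU}\F_2}\F_2\simeq\F_2\otimes_{\F_2[U]}\F_2\simeq\F_2[\BU]$ directly, with no Bott periodicity needed. The $\E_2$-structure then comes from the bar construction, provided the Thom isomorphism is established as an equivalence of augmented $\E_3$-algebras.

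A smaller point on your outer identifications, which are fine in outline. That $\mathrm{Re}$ commutes with $\Spc(\C)$-tensors of normed spectra does not follow formally from $\mathrm{Re}$ being symmetric monoidal and cocontinuous, because $\G_m\odot E$ is a tensor over motivic spaces, not a colimit of a constant diagram. You should cite the identification $\mathrm{Re}(\G_m\odot E)\simeq\THH(\mathrm{Re}\,E)$ from Bachmann--Burklund--Dundas--{\O}stv{\ae}r instead.

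The paper gives no separate proof of this lemma. It takes the $\E_2$-equivalence $\THH(\F_2/\MU)\simeq\F_2[\BU]$ from the author's earlier $\MU$-relative B\"okstedt periodicity. It then combines this with the realization compatibility of $\G_m\odot{-}$ and with $\mathrm{Re}(\BGL)\simeq\BU$.
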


\begin{lemma}
	The Tate $\overline{\mathrm{M}\F_2}$-modules $\overline{\mathrm{M}\F_2}$-module $\G_m \odot^{\MGL}\F_2/\tau$ and $(\G_m \odot^{\MGL}\Z)/(2,\tau)$ are pure Tate. In paticular
	\[
		\G_m \odot^{\MGL}\F_2
		\simeq
		\oplus_{n \geq 0} \left(\Sigma^{2n, n}\mathrm{M}\F_2\right)^{\oplus p(n)}
	\]
	and
	\[
		(\G_m \odot^{\MGL}\Z)/2
		\simeq
		\oplus_{n \geq 0} \left(\Sigma^{2n, n}\mathrm{M}\F_2\right)^{\oplus q(n)}
	\]
	each as $\mathrm{M}\F_2$-modules where $p(n)$ denotes the partition function and $q(n)$ denotes the reduced partition function $p(n)-p(n-1)$.
\end{lemma}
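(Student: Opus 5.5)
We combine the two mod-$\tau$ inputs already in hand: the Tateness of Proposition \ref{prop:mod_tau_tate}, and the collapse of the spectral sequence of Lemma \ref{lem:SS}, whose $E^2$-page is the tensor product over $n$ of the pieces computed in Lemmas \ref{lem:H_of_DG} and \ref{lem:DG'}. The claim then reduces to three points: (i) every class is concentrated on the Chow-degree-zero line $(2n,n)$; (ii) a Tate $\overline{\mathrm{M}\F_2}$-module with homotopy on that line splits as a sum of shifted copies of $\overline{\mathrm{M}\F_2}$; (iii) this splitting lifts from the mod-$\tau$ statement to a splitting over $\mathrm{M}\F_2$, with ranks that we then count.

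\emph{Step (i).} We read off the bidegrees. For $\F_2$, the generators $dx_0$ coming from each $\mathrm{Free}_{Q_2}(\sigma a_{2n})$-factor sit in bidegree $(2n,n)$. The divided power classes $y_{n,k}$ sit in tridegree $(1,2^{k+2}n-1,2^{k+1}n)$, so after totalizing they land in bidegree $(2^{k+2}n,2^{k+1}n)$. That bidegree again has the form $(2m,m)$, and divided powers preserve this form. Hence the $E^\infty=E^2$ page, and therefore $\pi_{\ast,\ast}$ of $(\G_m\odot^{\MGL}\F_2)/\tau$, is concentrated in Chow degree zero.

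\emph{Step (ii).} For a Tate, cellular $\overline{\mathrm{M}\F_2}$-module $M$ with this homotopy, we choose a basis of $\pi_{2n,n}M$. Each basis element gives a map $\Sigma^{2n,n}\overline{\mathrm{M}\F_2}\to M$. The coefficients $\pi_{\ast,\ast}\overline{\mathrm{M}\F_2}=\F_2$ sit in bidegree $(0,0)$, so the sum of these maps is a $\pi_{\ast,\ast}$-isomorphism. Cellularity then makes it an equivalence.

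\emph{Step (iii).} Next we lift to $\mathrm{M}\F_2$. Set $E=\G_m\odot^{\MGL}\mathrm{M}\F_2$; this is an $\mathrm{M}\F_2$-module, and $\pi_{\ast,\ast}\mathrm{M}\F_2=\F_2[\tau]$. We apply the $\tau$-Bockstein long exact sequence to $E\xrightarrow{\tau}E\to E/\tau$. Since $\tau$ has bidegree $(0,-1)$, every class in $\pi_{\ast,\ast}(E/\tau)$ lies in Chow degree zero. The connecting map raises the Chow degree $s-2w$ by one, because $\Sigma^{1,-1}$ shifts $s-2w$ by $3$ and $\tau$ accounts for the rest; so it can hit nothing. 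Therefore every class lifts, and $\pi_{\ast,\ast}E$ is $\tau$-torsion free, free over $\F_2[\tau]$ on lifts of the basis. This uses convergence of the $\tau$-Bockstein, i.e.\ $\tau$-completeness of $E$. We justify that completeness from cellularity together with the equivalence with $\THH(\F_2/\MU)$ after inverting $\tau$, stated in the preceding lemma. Applying the argument of step (ii) over $\mathrm{M}\F_2$ then gives the stated sum decomposition.

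For the multiplicities, we compare with Betti realization. The preceding lemma identifies $\mathrm{Re}(E)$ with $\F_2[BU]$, whose Poincaré series is $\prod_{i\ge1}(1-t^{2i})^{-1}$. So the rank in bidegree $(2n,n)$ is $p(n)$. For $\Z$ we use $\Z[BSU]\otimes\F_2$ instead, which removes the degree-$2$ generator; the ranks become $p(n)-p(n-1)$.

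The main obstacle is step (iii), namely controlling the $\tau$-Bockstein and the $\tau$-completeness of $E$. If the degree argument fails to rule out all differentials, we would instead compare Poincaré series directly. The mod-$\tau$ ranks already match $p(n)$, since the $E^2$ page of $\F_2[dx_0]\otimes\Gamma$ has the same series as $\F_2[BU]$. The realized ranks after inverting $\tau$ are also $p(n)$. Equality of these two series forces every Bockstein differential to vanish.
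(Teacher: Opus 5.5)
Your steps (i) and (ii), together with reading the ranks off the $E^2$-page $\F_2[dx_0]\otimes\bigotimes_k\Gamma_{\F_2}(y_{n,k})$, are exactly the paper's proof of this lemma. Tateness comes from Proposition \ref{prop:mod_tau_tate}, and concentration on the line $(2n,n)$ and the counts $p(n)$, $q(n)$ come from Lemma \ref{lem:H_of_DG}. The paper's proof stops at the mod-$\tau$ statement; the passage to $\mathrm{M}\F_2$-modules is done in Lemma \ref{lem:tate_with_tau}.

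The gap is in your step (iii). You justify $\tau$-completeness of $E=\G_m\odot^{\MGL}\mathrm{M}\F_2$ by cellularity plus $\mathrm{Re}(E)\simeq\THH(\F_2/\MU)$. Your fallback compares the Poincar\'e series of $E/\tau$ and $\mathrm{Re}(E)$. Neither works, as the following example shows.
Let $Q=\mathrm{cofib}(\mathrm{M}\F_2\to\mathrm{M}\F_2[\tau^{-1}])$, so that $Q/\tau\simeq\Sigma^{1,0}\overline{\mathrm{M}\F_2}$ and $\mathrm{Re}(Q)=0$. Put $E'=\Sigma^{2n,n}\mathrm{M}\F_2[\tau^{-1}]\oplus\Sigma^{2n-1,n}Q$.
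Then $E'$ is cellular, $E'/\tau\simeq\Sigma^{2n,n}\overline{\mathrm{M}\F_2}$ and $\mathrm{Re}(E')\simeq\Sigma^{2n}\F_2$. So both of your invariants agree with those of $\Sigma^{2n,n}\mathrm{M}\F_2$.
Yet $\pi_{2n,n+1}E'\supseteq\pi_{0,1}\mathrm{M}\F_2[\tau^{-1}]\neq0$, while $\pi_{2n,n+1}\Sigma^{2n,n}\mathrm{M}\F_2=0$.
What happens is that the bottom class of $E'/\tau$ has nonzero connecting map into an infinitely $\tau$-divisible $\tau$-torsion class. The rank lost there is made up by a $\tau$-periodic summand.

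Your degree bookkeeping hides this. The map $\partial\colon\pi_{s,w}(E/\tau)\to\pi_{s-1,w+1}(E)$ lowers $s-2w$ by $3$, not raises it by one. Moreover, ``it can hit nothing'' presupposes $\pi_{\ast,\ast}E=0$ in Chow degree $-3$, which is exactly what is at stake. The degree argument is valid only on the Bockstein $E_1$-page, where $d_r$ shifts Chow degree by $-1-2r$, and so only once convergence is known.

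The missing input is a vanishing line in the direction of $\tau$-division; this is what the paper supplies in Lemma \ref{lem:tate_with_tau}. $\MGL$ and $\mathrm{M}\F_2$ are very effective, and, as the paper argues, this is preserved by the constructions producing $\G_m\odot^{\MGL}\mathrm{M}\F_2$. Hence $\pi_{s,w}E=0$ for $s<w$, so in each stem $\pi_{s,w+k}E=0$ for $k\gg0$.
The Milnor sequence then gives $\lim\bigl(\cdots\xrightarrow{\tau}\Sigma^{0,-1}E\xrightarrow{\tau}E\bigr)=0$, so $E$ is $\tau$-complete. With that, the $\tau$-Bockstein spectral sequence converges and degenerates for your Chow-degree reason, and $\pi_{\ast,\ast}E$ is free over $\F_2[\tau]$ on Chow-degree-zero classes. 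Your splitting then goes through, and so does your Betti-realization count of $p(n)$ and $q(n)$, which is a fine alternative to the paper's count from the mod-$\tau$ page.
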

\begin{proof}
	We've already established Tateness to it remains to justify the pureness and the ranks $p(n)$ and $q(n)$. This follows from a counting arguement given Lemma \ref{lem:H_of_DG}.
\end{proof}

\begin{lemma}\label{lem:tate_with_tau}
	The $\mathrm{M}\F_2$-modules $\left(\G_m \odot^{\MGL} \F_2\right)$ and $\left(\G_m \odot^{\MGL} \mathrm{M}\Z\right)/2$ are pure Tate.
\end{lemma}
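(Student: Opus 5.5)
The plan is to lift the pure Tate splitting modulo $\tau$, from the preceding lemma, to a splitting over $\mathrm{M}\F_2$ itself. We treat $\G_m \odot^{\MGL}\F_2$ in detail; the case $(\G_m \odot^{\MGL}\mathrm{M}\Z)/2$ goes the same way with $q(n)$ in place of $p(n)$.

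\textbf{Step 1: cellularity and completeness.} Write $E$ for the $\mathrm{M}\F_2$-module $\G_m \odot^{\MGL}\F_2$. First we check that $E$ is cellular. It is a colimit, computed in normed $\MGL$-algebras, of tensor powers of the cellular object $\mathrm{M}\F_2$ over $\MGL$, and $\G_m$ is itself cellular. Next we check that $E$ is $\tau$-complete. Since $E$ is connective in the Chow $t$-structure and everything has been $2$-completed, this follows from the usual fact that $\mathrm{M}\F_2$-modules of this kind are $\tau$-complete over $\C$.

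\textbf{Step 2: build the comparison map.} For each $n$ we choose $p(n)$ classes in $\pi_{2n,n}E$ that lift the basis of $\pi_{2n,n}(E/\tau)$ given by the preceding lemma. The lift exists because of the long exact sequence
\[
\pi_{2n,n}E \xrightarrow{\tau} \pi_{2n,n-1}E \to \ldots
\]
together with its neighbour, in which the obstruction group lies in the Chow degree $-1$ part of $\pi_{*,*}(E/\tau)$. Purity forces that group to vanish, so the reduction $\pi_{2n,n}E\to\pi_{2n,n}(E/\tau)$ is surjective. The chosen classes assemble into an $\mathrm{M}\F_2$-module map
\[
\phi\colon \bigoplus_{n\ge 0}\bigl(\Sigma^{2n,n}\mathrm{M}\F_2\bigr)^{\oplus p(n)} \longrightarrow E.
\]

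\textbf{Step 3: $\phi$ is an equivalence.} By construction $\phi/\tau$ is an equivalence, since it is exactly the splitting from the preceding lemma. Hence the cofiber $C$ of $\phi$ satisfies $C/\tau=0$, so $\tau$ acts invertibly on $C$. Since both sides of $\phi$ are $\tau$-complete, so is $C$, and a $\tau$-complete module on which $\tau$ is invertible vanishes. Therefore $C=0$ and $\phi$ is an equivalence.

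As a consistency check, we compare with the Betti realization lemma. Inverting $\tau$ should give $\F_2[BU]$, whose Poincaré series is $\prod_{i\ge 1}(1-t^{2i})^{-1}$ with coefficients $p(n)$, and $\F_2[BSU]$ with coefficients $q(n)$.

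\textbf{Expected obstacle.} The hard step is the $\tau$-completeness in Step 1, and more precisely the convergence needed to pass from "$C/\tau=0$" to "$C=0$". If completeness is unclear, the fallback is to use cellularity together with the fact that Betti realization is conservative on cellular $\tau$-complete $\mathrm{M}\F_2$-modules, checking $\phi[\tau^{-1}]$ directly against $\THH(\F_2/\MU)\simeq\F_2[BU]$. In that route one also needs the Chow $t$-structure bounded below, to rule out phantom $\tau$-divisible pieces.
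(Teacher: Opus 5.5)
There is a genuine gap, and it sits exactly where the content of the lemma lies: you need to control $\pi_{*,*}E$ itself, not just $\pi_{*,*}(E/\tau)$.

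\textbf{Step 1 assumes what is to be shown.} You justify $\tau$-completeness by asserting that $E$ is Chow-connective. Nothing a priori gives this. The construction $\G_m\odot(-)$ is built from $\G_m$-suspensions, and $\Sigma^{1,1}$ has Chow degree $1-2=-1$. Already $\mathrm{M}\F_2\otimes\Sigma^\infty_+\G_m\simeq\mathrm{M}\F_2\oplus\Sigma^{1,1}\mathrm{M}\F_2$ has a class in Chow degree $-1$. Since $E/\tau$ is already known to be pure, Chow-connectivity of $E$ is essentially the statement you are trying to prove.

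\textbf{Step 2 is argued incorrectly.} The obstruction to lifting $\bar x\in\pi_{2n,n}(E/\tau)$ is $\partial\bar x\in\pi_{2n-1,n+1}E$. This is a homotopy group of $E$ (in Chow degree $-3$), not of $E/\tau$, and purity of $E/\tau$ alone does not kill it. For example, $X=\Sigma^{-1,0}\bigl(\mathrm{M}\F_2[\tau^{-1}]/\mathrm{M}\F_2\bigr)$ has $X/\tau\simeq\overline{\mathrm{M}\F_2}$, which is pure. Yet $\pi_{0,0}X=0$ and $\pi_{-1,1}X\neq 0$, so the generator does not lift.

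\textbf{The fallback also fails.} Betti realization is not conservative on cellular $\tau$-complete modules: $\overline{\mathrm{M}\F_2}$ is such a module and realizes to zero.

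The missing input is the one the paper uses. $\MGL$, $\mathrm{M}\Z$ and $\mathrm{M}\F_2$ are very effective, and very effectiveness survives the normed constructions; in particular $\Sigma^{1,1}\S$ is very effective. Hence $\pi_{s,w}E=0$ whenever $s<w$. This vanishing for $w\gg 0$ at fixed $s$ is exactly what rules out infinitely $\tau$-divisible classes like those in $X$. With it, the argument closes:
\begin{enumerate}
\item A downward induction on $w$, using the $\tau$-cofiber sequence and purity of $E/\tau$, shows that $\pi_{*,*}E$ vanishes in negative Chow degree and that $\pi_{2n,n}E\cong\pi_{2n,n}(E/\tau)$. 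So your lifts exist. Equivalently, the vanishing gives $\tau$-completeness via the Milnor sequence.
\item The cofiber $C$ in Step 3 also satisfies $\pi_{s,w}C=0$ for $s<w$. Together with $\tau$ acting invertibly, this forces $\pi_{*,*}C=0$, hence $C=0$ by cellularity.
\end{enumerate}
With this supplied, your Steps 2 and 3 go through and essentially reproduce the paper's argument.
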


\begin{proof}
	We first argue that $\pi_{s,t}=0$ for these two $\mathrm{M}\F_2$-modules whenever $s<t$. This follows from $\MGL$, $\mathrm{M}\Z$, $\mathrm{M}\F_2$ being very effective (\cite[Section 5]{spitzweckMotivicTwistedKtheory2012}, \cite[Section 7]{hoyoisAlgebraicCobordismMotivic2015}) and that being very effective is preserved by the various operations we apply. Do decude purity and the $\mathrm{M}\F_2$-module structures, this from looking at the cofiber sequences assoicated to $\tau$ and using the vanishing above argued from very effectiveness.
\end{proof}

\begin{corollary}
	The motivic spectra $\MGL[BGL]$ and $\MGL[BSL]$ are pure tate of finite type and have the same $\mathrm{M}\F_2$-module decomposition as $\G_m \odot^{\MGL}\F_2$ and $(\G_m \odot^{\MGL}\Z)$, respectively.
\end{corollary}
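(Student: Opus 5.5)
The plan is to show that $\MGL[\BGL]$ and $\MGL[\BSL]$ are pure Tate of finite type, compute the $\mathrm{M}\F_2$-module $\mathrm{M}\F_2\otimes\MGL[-]$, and compare it with the decompositions of Lemma \ref{lem:tate_with_tau} and the preceding lemma by counting ranks.

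\textbf{Step 1: cellularity and purity.} The motivic spaces $\BGL$ and $\BSL$ are filtered colimits of Grassmannians and special linear Grassmannians. These admit affine cellular stratifications: Schubert cells for $\BGL$, and for $\BSL$ the $\G_m$-torsor over $\BGL$ given by the determinant, whose Thom-class argument goes through the fiber sequence used in Lemma \ref{lem:loop_spaces}. Hence $\Sigma^\infty_+\BGL$ and $\Sigma^\infty_+\BSL$ are cellular, built from cells $\Sigma^{2n,n}$ with finitely many cells in each weight. Tensoring with $\MGL$, the Atiyah–Hirzebruch/cellular spectral sequence collapses for evenness reasons, as in Lemma \ref{lem:Tor_even}. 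This gives
\[
\MGL[\BGL]\simeq\bigoplus_{n\ge 0}\bigl(\Sigma^{2n,n}\MGL\bigr)^{\oplus p(n)},
\]
which is the motivic lift of $\MU_\ast BU\cong\MU_\ast[b_1,b_2,\dots]$ with $\abs{b_i}=(2i,i)$. Lemma \ref{lem:realize} identifies the ranks with the topological ones after Betti realization. For $\BSL$ we use $\MU_\ast BSU$, the quotient of $\MU_\ast BU$ by the image of $\MU_\ast\C P^\infty$ in the Hopf algebra sense. Its graded rank in degree $2n$ is $p(n)-p(n-1)=q(n)$, and the same collapse argument lifts this to $\MGL[\BSL]$.

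\textbf{Step 2: comparison.} Base changing along $\MGL\to\mathrm{M}\F_2$ gives
\[
\mathrm{M}\F_2\otimes_{\MGL}\MGL[\BGL]\simeq\mathrm{M}\F_2[\BGL]\simeq\bigoplus_{n\ge 0}\bigl(\Sigma^{2n,n}\mathrm{M}\F_2\bigr)^{\oplus p(n)},
\]
and similarly for $\BSL$ with $q(n)$. These are exactly the decompositions of $\G_m\odot^{\MGL}\F_2$ and $(\G_m\odot^{\MGL}\Z)/2$ from the preceding lemmas. Since a pure Tate $\mathrm{M}\F_2$-module is determined up to equivalence by its bigraded rank function, the two sides agree. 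Moreover, Betti realization of both sides gives $\F_2[BU]\simeq\THH(\F_2/\MU)$ (and the $\Z$ analogue), consistent with the lemma before Lemma \ref{lem:tate_with_tau}.

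\textbf{Main obstacle.} The delicate point is the $\BSL$ case. We must see that the determinant fibration $\BSL\to\BGL\to B_s\G_m$ is "$\MGL$-split", so that $\MGL[\BSL]$ is a free $\MGL$-module of the expected rank rather than merely Tate. I would first try an Eilenberg–Moore/Tor spectral sequence, as in Lemma \ref{lem:Tor_even}, computing
\[
\MGL[\BSL]\simeq\MGL[\BGL]\otimes_{\MGL[B_s\G_m]}\MGL .
\]
Here $\MGL_{\ast,\ast}[\BGL]$ is free over $\MGL_{\ast,\ast}[B_s\G_m]$, by the motivic analogue of the splitting $BU\simeq BSU\times\C P^\infty$ on $\MU$-homology. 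The Tor groups are then concentrated in filtration $0$ and even bidegrees $(2n,n)$, so the spectral sequence collapses, and the Chow-degree-zero condition rules out extensions.
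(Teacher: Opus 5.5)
The paper states this corollary with no proof at all. Your outline is the evident intended argument: get a free $\MGL$-module structure from a cell structure, base change along $\MGL\to\mathrm{M}\F_2$, and match the ranks $p(n)$, $q(n)$ against the two preceding lemmas. Your $\BGL$ half is fine. The evenness collapse is legitimate because everything is $p$-completed, so $\pi_{\ast,\ast}\MGL$ lies in even stems; integrally over $\C$ it does not, and you would use the projective bundle formula instead.

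The $\BSL$ half does not work as written. The formula $\MGL[\BSL]\simeq\MGL[\BGL]\otimes_{\MGL[B_s\G_m]}\MGL$ has no natural meaning. The inclusion $j\colon B_s\G_m\to\BGL$, $\ell\mapsto[\ell]-1$, is not a map of monoids (tensor product of line bundles versus direct sum), so it does not make $\MGL[\BGL]$ an $\MGL[B_s\G_m]$-module. The $\E_\infty$ fiber sequence $\BSL\to\BGL\xrightarrow{\det}B_s\G_m$ gives $\MGL[B_s\G_m]\simeq\MGL[\BGL]\otimes_{\MGL[\BSL]}\MGL$, which runs the other way. If you instead transport a module structure through the space-level splitting, your input ``$\MGL_{\ast,\ast}[\BGL]$ is free over $\MGL_{\ast,\ast}[B_s\G_m]$'' becomes equivalent to $\MGL_{\ast,\ast}[\BSL]$ being free over $\MGL_{\ast,\ast}$. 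That is exactly what you are trying to prove, so the Tor step is circular.

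Step 1 also overclaims. The torsor/Gysin cofiber sequence $\Sigma^\infty_+\BSL\to\Sigma^\infty_+\BGL\to\mathrm{Th}(\det)$ produces cells in bidegrees $(2n+1,n+1)$ as well as $(2n,n)$, so there is no evenness collapse for $\BSL$. Your description of $\MU_\ast BSU$ is also off: the image of $\MU_\ast\C P^\infty$ (the classes $b_i$) generates $\MU_\ast BU$ as an algebra, so that literal quotient is $\MU_\ast$. The right description is the Hopf kernel of $\det_\ast$, though your count $q(n)=p(n)-p(n-1)$ is correct.

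The repair uses the ingredient you already name. The map $x\mapsto(x-j(\det x),\det x)$ is an equivalence of motivic spaces $\BGL\simeq\BSL\times B_s\G_m$ (not of monoids), so $\BSL$ is a retract of $\BGL$. This gives the following chain:
\begin{itemize}
\item $\Sigma^\infty_+\BSL$ is cellular, being a retract of a cellular spectrum.
\item $\MGL[\BSL]$ is a retract of $\bigoplus_n(\Sigma^{2n,n}\MGL)^{\oplus p(n)}$.
\item Its homotopy is a bounded-below, finite-type, graded projective $\pi_{\ast,\ast}\MGL$-module. It is therefore free by graded Nakayama, grading by stem minus weight, in which $\pi_{\ast,\ast}\MGL$ is connected with degree-zero part $\Z_p$.
\item Lifting a basis gives a $\pi_{\ast,\ast}$-isomorphism between cellular objects, hence an equivalence.
\end{itemize}
Künneth for $\MGL[\BGL]\simeq\MGL[\BSL]\otimes_{\MGL}\MGL[B_s\G_m]$, with the second factor free, then gives rank $q(n)$ in bidegree $(2n,n)$. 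Alternatively, after the Thom isomorphism the Gysin map becomes cap product with $c_1(\det)$, which is split surjective between free modules, and that also does the job.
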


\begin{lemma}\label{lem:betti_realize}
	Betti realization is fully faithful on nonnegative, finite-type pure Tate $\mathrm{M}\F_2$-modules, and a similar statement is true for $\mathrm{M}\Z$-modules
\end{lemma}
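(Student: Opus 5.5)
The plan is to reduce full faithfulness to a computation on the generators $\Sigma^{2n,n}\mathrm{M}\F_2$, and then compare bigraded homotopy of the motivic point with ordinary homotopy. Here ``fully faithful'' is read at the level of mapping spaces, equivalently connective mapping spectra. Throughout, Betti realization is regarded as the functor $\mathrm{Re}\colon \Mod_{\mathrm{M}\F_2}\rightarrow\Mod_{\F_2}$. It is symmetric monoidal and colimit preserving, and it sends $\Sigma^{2n,n}\mathrm{M}\F_2$ to $\Sigma^{2n}\F_2$.

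\textbf{Step 1 (generators).} Let $M=\bigoplus_a (\Sigma^{2a,a}\mathrm{M}\F_2)^{\oplus r_a}$ and $N=\bigoplus_b(\Sigma^{2b,b}\mathrm{M}\F_2)^{\oplus s_b}$ be nonnegative and of finite type, so $a,b\geq 0$ and $r_a,s_b<\infty$.
\begin{itemize}
\item The mapping spectrum out of a sum is the product of the mapping spectra out of its summands.
\item Each $\Sigma^{2a,a}\mathrm{M}\F_2$ is compact, so the mapping spectrum into a sum is the sum of the mapping spectra into its summands.
\end{itemize}
The same decomposition holds after realization, because $\mathrm{Re}$ preserves sums and $\Sigma^{2a}\F_2$ is compact. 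It therefore suffices to show that
\[
\mathrm{map}(\Sigma^{2a,a}\mathrm{M}\F_2,\Sigma^{2b,b}\mathrm{M}\F_2)\rightarrow \mathrm{map}(\Sigma^{2a}\F_2,\Sigma^{2b}\F_2)
\]
induces an isomorphism on $\pi_k$ for all $k\geq 0$. Finite type ensures that the products are taken over finitely many summands in each degree, so the comparison is compatible termwise.

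\textbf{Step 2 (homotopy of the point).} The source has
\[
\pi_k=\pi_{2(a-b)+k,\,a-b}\,\mathrm{M}\F_2 .
\]
Over $\C$ one has $\pi_{\ast,\ast}\mathrm{M}\F_2=\F_2[\tau]$ with $|\tau|=(0,-1)$. So $\pi_k$ is nonzero exactly when $2(a-b)+k=0$ and $a-b\leq 0$, that is when $k=2(b-a)\geq 0$. In that case it is $\F_2\{\tau^{b-a}\}$.

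The target has $\pi_k=\pi_{2(a-b)+k}\F_2$, which is nonzero exactly when $k=2(b-a)$. In the range $k\geq 0$ this again forces $b\geq a$. The realization map sends $\tau$ to $1$, hence $\tau^{b-a}\mapsto 1$, which is an isomorphism.

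In negative degrees $k$ the two sides differ: the target sees the classes with $b<a$ and the source does not. This is precisely why the statement is about mapping spaces, that is, $k\geq 0$. It is also where the hypotheses enter: purity, meaning the summands have the form $(2n,n)$ and not general $(s,w)$, is exactly what puts the only potentially nonzero groups on the line where $\tau$-powers live.

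\textbf{Step 3 (the $\mathrm{M}\Z$ case).} The same argument applies to $p$-complete $\mathrm{M}\Z$-modules, using $\pi_{\ast,\ast}(\mathrm{M}\Z)_2^{\wedge}=\Z_2^{\wedge}[\tau]$ over $\C$. This follows from Beilinson--Lichtenbaum: the groups $H^{p,q}(\C;\Z_2^{\wedge})=H^p_{\mathrm{et}}(\C,\Z_2(q))$ vanish for $p\neq 0$ and equal $\Z_2^{\wedge}$ for $p=0$, $q\geq 0$.

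\textbf{Main obstacle.} The real content is Step 2, namely the vanishing of motivic cohomology of the point off the $\tau$-line. For $\mathrm{M}\F_2$ this is standard. For $\mathrm{M}\Z$ I would first try to deduce it from the $\mathrm{M}\F_2$ case by a Bockstein argument together with $2$-completeness, rather than invoking \'etale comparison directly.

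A secondary point to check is the exchange of sums and products in Step 1 when there are infinitely many summands. Nonnegativity together with finite type makes each degree of the product a finite computation, so the mapping spectrum between $M$ and $N$ is computed by the same locally finite matrix of generator-level maps on both sides.
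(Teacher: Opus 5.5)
Your argument is correct and is essentially the paper's: reduce via sums to the generators $\Sigma^{2n,n}\mathrm{M}\F_2$ and compare mapping spaces there. The only difference is that you carry out the generator computation yourself from $\pi_{\ast,\ast}\mathrm{M}\F_2\cong\F_2[\tau]$ (resp.\ $\Z_2^{\wedge}[\tau]$), correctly noting that only $\pi_k$ for $k\geq 0$ agree, whereas the paper cites Bachmann--Burklund--Dundas--{\O}stv{\ae}r for that case.

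One harmless inaccuracy: finite type does not make each degree a finite computation, since for $k$ even $\pi_k\,\mathrm{map}(M,N)\cong\prod_a\F_2^{\,r_a s_{a+k/2}}$ is typically an infinite product. No finiteness is needed here, because $\pi_k$ commutes with arbitrary products and sums of spectra, so for $\mathrm{M}\F_2$ your Steps 1--2 go through without the finite-type or nonnegativity hypotheses.
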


\begin{proof}
	To show the claim $\mathrm{M}\F_2$-modules, by our assumptions it suffices to show this claim for the $\mathrm{M}\F_2$-modules $\sigma^{n, 2n}\mathrm{M}\F_2$. This case was proven in \cite[middle of page 6]{bachmannMotivicHochschildHomology2026}. The arguement for $\mathrm{M}\Z$-modules is essentially the same starting with the fact that $\pi_{\ast, \ast}(\mathrm{M}\Z)\cong \Z[\tau]$.
\end{proof}

\begin{theorem}[Theorem B]\label{thm:B}
	There exists an equivalence
	\[
		\G_m \odot^{\MGL}\mathrm{M}\F_2
		\simeq
		\mathrm{M}\F_2[\BGL]
	\]
	of $\E_2$-$\mathrm{M}\F_2$-algberas. Furthermore there exists an equivalence
	\[
		\G_m \odot^{\MGL}\mathrm{M}\Z
		\simeq
		\mathrm{M}\Z[\BSL]
	\]
	of $\E_2$-$\mathrm{M}\Z$-algberas.
\end{theorem}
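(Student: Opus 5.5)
The plan is to follow the strategy announced at the start of this section: first build the comparison map topologically, then lift it motivically using full faithfulness of Betti realization. I treat the $\mathrm{M}\F_2$ case in detail; the $\mathrm{M}\Z$ case is the same with $\BSL$, $BSU$, the ranks $q(n)$ and the $\mathrm{M}\Z$-version of Lemma \ref{lem:betti_realize}.

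\emph{Step 1: purity on both sides.} By Lemma \ref{lem:tate_with_tau}, $\G_m \odot^{\MGL}\mathrm{M}\F_2$ is a nonnegative, finite-type pure Tate $\mathrm{M}\F_2$-module, of the form $\bigoplus_{n}(\Sigma^{2n,n}\mathrm{M}\F_2)^{\oplus p(n)}$. By the Corollary following it, $\mathrm{M}\F_2[\BGL]\simeq \mathrm{M}\F_2\otimes_{\MGL}\MGL[\BGL]$ is also pure Tate of finite type, with the same ranks.

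\emph{Step 2: the topological equivalence.} The Lemma preceding Lemma \ref{lem:tate_with_tau} gives an $\E_2$-equivalence $\mathrm{Re}(\G_m \odot^{\MGL}\mathrm{M}\F_2)\simeq \THH(\F_2/\MU)\simeq \F_2[BU]\simeq \mathrm{Re}(\mathrm{M}\F_2[\BGL])$. Here Betti realization inverts $\tau$ and sends $\G_m$ to $S^1$. This comes from \cite[Theorem A]{cranfordGeneralizedThomSpectra2026} and the Thom isomorphism for THH of Thom spectra.

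\emph{Step 3: lifting the $\E_2$-map.} This is the main step. The $\E_2$-$\mathrm{M}\F_2$-algebra structures must be compared, not just the modules. My approach is to show that Betti realization is fully faithful on $\E_2$-algebras whose underlying modules are nonnegative, finite-type pure Tate. Write $\mathrm{Map}_{\E_2}(A,B)$ as the totalization of the cobar/monadic resolution $\mathrm{Map}_{\mathrm{Mod}}(\mathrm{Free}_{\E_2}^{\circ k}A,B)$. The key point is that $\mathrm{Free}_{\E_2}$ preserves the class of nonnegative pure Tate $\mathrm{M}\F_2$-modules. This holds because $\mathrm{M}\F_2$-homology of configuration spaces of $\mathbb{R}^2$ with Tate coefficients is again pure Tate: extended powers of $\Sigma^{2n,n}\mathrm{M}\F_2$ produce Dyer–Lashof-type classes in bidegrees $(2m,m)$ and $(2m+1,m)$, and one must check that these still lie in the range where Lemma \ref{lem:betti_realize} applies. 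I would try to handle this by instead using the cellular/synthetic description from the Recollection at the start of Section \ref{sec:math}: cellular $\mathrm{M}\F_2$-modules correspond to filtered objects, and there Betti realization is the colimit (the $\tau$-inversion). Pure Tate objects concentrated in weight equal to half the degree correspond to the "even filtration" of their realizations. Then $\fil_{\ev}$, being lax monoidal and right adjoint to realization on this subcategory, supplies the inverse on $\E_2$-algebras.

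\emph{Step 4: conclusion.} Granting Step 3, the topological $\E_2$-equivalence of Step 2 lifts uniquely to an $\E_2$-$\mathrm{M}\F_2$-algebra map $f_{\F_2}\colon \mathrm{M}\F_2[\BGL]\to \G_m\odot^{\MGL}\mathrm{M}\F_2$. Its Betti realization is an equivalence. Both sides are pure Tate with matching ranks and $\pi_{\ast,\ast}$ is $\tau$-torsion free, so $f_{\F_2}$ is an equivalence on $\pi_{\ast,\ast}$. Because the modules are cellular, $f_{\F_2}$ is therefore an equivalence. I expect Step 3, the compatibility of the $\E_2$-structure with full faithfulness, to be the main obstacle.
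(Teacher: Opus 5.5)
Your outline matches the paper's. There, the proof of Theorem \ref{thm:B} is a single sentence: lift the topological $\E_2$-equivalences $\THH(\F_2/\MU)\simeq\F_2[BU]$ and $\THH(\Z/\MU)\simeq\Z[BSU]$ along Betti realization via Lemma \ref{lem:betti_realize}. Purity of both sides comes from Lemma \ref{lem:tate_with_tau} and the corollary after it.

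\textbf{The gap is in your Step 3.} Its key claim, that $\mathrm{Free}_{\E_2}$ preserves nonnegative pure Tate $\mathrm{M}\F_2$-modules, is false. The classes you yourself mention are the counterexample. The configuration spaces $\mathrm{Conf}_k(\mathbb{R}^2)$ are constant motivic spaces, so they contribute only simplicial shifts $\Sigma^{j,0}$. For instance, by the Thom isomorphism the quadratic part of $\mathrm{Free}_{\E_2}(\Sigma^{2,1}\mathrm{M}\F_2)$ is $\Sigma^{4,2}\mathrm{M}\F_2\otimes(\mathrm{Conf}_2(\mathbb{R}^2)/\Sigma_2)_+\simeq\Sigma^{4,2}\mathrm{M}\F_2\oplus\Sigma^{5,2}\mathrm{M}\F_2$, and $\Sigma^{5,2}\mathrm{M}\F_2$ is not of the form $\Sigma^{2n,n}\mathrm{M}\F_2$. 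Lemma \ref{lem:betti_realize} says nothing about such cells, so your totalization argument does not go through as written.

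Your $\fil_{\ev}$ fallback is only a sketch. Restricted to pure Tate objects, $\mathrm{Re}$ has no right adjoint defined on all of $\mathrm{Mod}_{\F_2}$, because odd-degree classes have nowhere to go. You would have to restrict to even targets, identify pure Tate objects with the double-speed Whitehead towers of even $\F_2$-modules, and check monoidality. Done properly, this reduces to the observation below.

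\textbf{The missing idea} is that the free algebras never need to stay in the subcategory. Let $\mathcal{P}$ be the nonnegative finite-type pure Tate $\mathrm{M}\F_2$-modules.
\begin{itemize}
\item $\mathcal{P}$ contains the unit and is closed under $\otimes_{\mathrm{M}\F_2}$, since $\Sigma^{2a,a}\mathrm{M}\F_2\otimes_{\mathrm{M}\F_2}\Sigma^{2b,b}\mathrm{M}\F_2\simeq\Sigma^{2(a+b),a+b}\mathrm{M}\F_2$.
\item $\mathrm{Re}\colon\mathrm{Mod}_{\mathrm{M}\F_2}\to\mathrm{Mod}_{\F_2}$ is symmetric monoidal, so $\mathrm{Re}|_{\mathcal{P}}$ is a fully faithful symmetric monoidal functor.
\item It therefore induces a fully faithful map of $\infty$-operads, since the multimorphism spaces $\mathrm{Map}(A_1\otimes\cdots\otimes A_k,B)$ are preserved.
\item Hence $\mathrm{Alg}_{\E_2}(\mathcal{P})\to\mathrm{Alg}_{\E_2}(\mathrm{Mod}_{\F_2})$ is fully faithful.
\end{itemize}
This is what the paper's appeal to Lemma \ref{lem:betti_realize} tacitly uses.

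In your resolution language, the same point is that $\mathrm{Map}(\mathrm{Free}_{\E_2}X,B)\simeq\prod_k\mathrm{Map}(\mathrm{Conf}_k(\mathbb{R}^2),\mathrm{Map}(X^{\otimes k},B))^{h\Sigma_k}$. After unwinding, only mapping spaces out of tensor powers of objects of $\mathcal{P}$ occur.

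Alternatively, Lemma \ref{lem:betti_realize} extends to sources built from cells $\Sigma^{2m+j,m}\mathrm{M}\F_2$ with $j\geq 0$. For $t\geq 0$, the group $\pi_t\mathrm{Map}(\Sigma^{2m+j,m}\mathrm{M}\F_2,\Sigma^{2k,k}\mathrm{M}\F_2)\cong\pi_{2m+j+t-2k,m-k}\mathrm{M}\F_2$ is nonzero only when $2k=2m+j+t$. That already forces $k\geq m$, so it matches the topological answer, and this also rescues your totalization.

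Either way, Step 4 then needs no rank count. A fully faithful functor reflects equivalences, so the lifted map is itself an equivalence.
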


\begin{proof}
	We use Lemma \ref{lem:betti_realize} to motivially lift the equivalences $\THH(\F_2/\MU)\cong \F_2[\BU]$ and $\THH(\Z/\MU) \cong \Z[\BSU]$ of $\E_2$-rings from topology by Lemma \ref{lem:betti_realize}.
\end{proof}

\bibliographystyle{amsalpha}
\bibliography{ref}
\medskip
\noindent
\textsc{Department of Mathematics, Northwestern University}\\
{\ttfamily prestoncranford2027@u.northwestern.edu}
\end{document}